\documentclass[12pt,a4paper]{article}
\usepackage[margin=25mm]{geometry}
\usepackage{amssymb,amsthm,amsmath, amsfonts,ascmac, enumerate}
\numberwithin{equation}{section}
\usepackage[colorlinks=true,linkcolor=blue,citecolor=blue]{hyperref}

\theoremstyle{definition}
\newtheorem{thm}{Theorem}[section]
\newtheorem{defn}[thm]{Definition}
\newtheorem{lem}[thm]{Lemma}
\newtheorem{prop}[thm]{Proposition}
\newtheorem{rem}[thm]{Remark}
\newtheorem{cor}[thm]{Corollary}
\newtheorem{ex}[thm]{Example}
\newtheorem{prob}[thm]{Problem}

\newcommand{\Ran}{{\sf Ran}\ }
\newcommand{\Ker}{{\sf Ker}\ }
\newcommand{\Rank}{{\sf rank}}
\newcommand{\Det}{{\sf det}\ }
\newcommand{\diag}{{\sf diag}}
\newcommand{\Span}{{\sf span}}
\newcommand{\Dim}{{\sf dim}}

\title{Finite free position of maximal abelian $\ast$-subalgebras of the matrix algebra}

\author{Yuki Ueda}

\date{\today}

\begin{document}

\maketitle

\begin{abstract} 
Finite free convolution is obtained by averaging characteristic polynomials over Haar unitary conjugation. We ask when two maximal abelian $\ast$-subalgebras of the complex matrix algebra ${\cal M}_n$ can be placed in finite free position: that is, when their relative position realizes this averaging exactly for every pair of elements, one from each subalgebra.  Writing such a pair as ${\cal D}_n$ and $U{\cal D}_nU^*$ with $U$ unitary, we characterize finite free position, for either additive or multiplicative convolution, by the condition $|\Det U[I,J]|^2=\binom{n}{r}^{-1}$ for every $1\le r\le n$ and all $I,J$ with $|I|=|J|=r$. We show that this condition holds if and only if $n\le3$ and $\sqrt{n} U$ is a complex Hadamard matrix.

To quantify the failure of exact realization for $n\ge 4$, we introduce the uniform-minor discrepancy $\delta_r(U)$. We identify it with the mean-square error in the $r$-th coefficient of finite free multiplicative convolution for two diagonal matrices whose diagonal entries are independent and uniformly distributed on the unit circle. We establish the symmetry $\delta_r(U)=\delta_{n-r}(U)$ and the monotonicity $\delta_1(U)\le\delta_2(U)\le\cdots\le \delta_{\lfloor n/2\rfloor}(U)$. For flat unitaries, we derive an explicit formula for $\delta_2$, yielding $\delta_r(U)\ge\frac{n-3}{2n}$ for $2\le r\le n-2$. Equality for $r=2$ holds precisely when the entrywise square of $\sqrt{n}U$ is also complex Hadamard.
\end{abstract}

\vspace{3mm}

{\footnotesize \textit{Keywords}: Finite free probability, finite free position, principally balanced matrices, Cauchy--Binet formula, complex Hadamard matrices}

{\footnotesize \textbf{MSC2020}: 46L54, 15A15, 15B34, 46L53}

\section{Introduction}

Throughout this paper, ${\cal M}_{m\times n}$ denotes the space of $m\times n$ complex matrices, and we write ${\cal M}_n:={\cal M}_{n\times n}$. We denote by ${\cal U}_n$ the group of $n\times n$ unitary matrices. The matrices $I_n$ and $J_n$ denote the identity matrix and the all-ones matrix of order $n$, respectively.

Polynomial convolutions have a classical history in the work of Szeg\H{o} \cite{S22} and Walsh \cite{W22} on the preservation of real-rootedness. Building on this tradition, Marcus, Spielman and Srivastava \cite{MSS22} and Marcus \cite{Mar21} connected these convolutions with random matrices and free probability, developing a finite-dimensional theory known as {\it finite free probability}. The corresponding polynomial convolutions are now known as {\it finite free convolutions}. For $A,B\in{\cal M}_n$, the finite free additive and multiplicative convolutions of their characteristic polynomials $\chi_A$ and $\chi_B$ are defined by
$$
(\chi_A\boxplus_n\chi_B)(x):= \mathbb{E}_Q[\chi_{A+QBQ^\ast}(x)],
\qquad
(\chi_A\boxtimes_n\chi_B)(x):= \mathbb{E}_Q[\chi_{AQBQ^\ast}(x)],
$$
where $Q$ is Haar distributed on ${\cal U}_n$. Connections with free probability have been investigated from several perspectives; see, for example, \cite{AFPU26, AGP23, AP18, Fuj26, Mar21, MSS22}.

Following \cite{ALR25}, matrices $A,B\in{\cal M}_n$ are said to be in {\it additive finite free position} if
$$
\chi_A\boxplus_n\chi_B= \chi_{A+B},
$$
and in {\it multiplicative finite free position} if
$$
\chi_A\boxtimes_n\chi_B = \chi_{AB}.
$$
We say that two $\ast$-subalgebras are in additive (resp.\ multiplicative) finite free position if every pair of elements, one from each algebra, is in the corresponding finite free position. We ask when unitary averaging in finite free convolution can be replaced by a single deterministic relative position, simultaneously for all such pairs of elements. In this paper, we address this question for maximal abelian $\ast$-subalgebras (MASAs) of ${\cal M}_n$.
Let
$$
{\cal D}_n :=\{\diag(d_1,\ldots,d_n):d_i\in\mathbb{C}\}.
$$
Every pair of MASAs is, up to simultaneous unitary conjugation, of the form ${\cal D}_n$ and $U{\cal D}_nU^\ast$ for some $U\in{\cal U}_n$. Thus our problem takes the following concrete form.

\begin{prob}\label{prob:main}
For which $n\in\mathbb{N}$ and $U\in{\cal U}_n$ are ${\cal D}_n$ and $U{\cal D}_nU^\ast$ in additive (resp.\ multiplicative) finite free position? Equivalently, when does
$$
\chi_A\boxplus_n\chi_B=\chi_{A+UBU^\ast}
\qquad
\left(\text{resp. }\;
\chi_A\boxtimes_n\chi_B=\chi_{AUBU^\ast}\right)
$$
hold for every $A,B\in{\cal D}_n$?
\end{prob}
Arizmendi, Lehner and Rosenmann \cite{ALR25} identified the finite free complement of ${\cal D}_n$, for both convolutions, with the class of principally balanced matrices: matrices whose principal minors of each fixed order are all equal. Consequently, Problem~\ref{prob:main} is equivalent to determining those $U\in{\cal U}_n$ for which every matrix in $U{\cal D}_nU^\ast$ is principally balanced.

Our first main result gives a complete answer. Write $[n]:=\{1,\ldots,n\}$ and $N_{n,r}:=\binom{n}{r}$, and let $U[I,J]$ denote the submatrix with row set $I$ and column set $J$. Theorem~\ref{thm:MASA} shows that the following conditions are equivalent:
\begin{itemize}
\item ${\cal D}_n$ and $U{\cal D}_nU^\ast$ are in additive finite free position;
\item ${\cal D}_n$ and $U{\cal D}_nU^\ast$ are in multiplicative finite free position;
\item $|\det U[I,J]|^2=N_{n,r}^{-1}$ for every $1\le r\le n$ and every $I,J\subset[n]$ with $|I|=|J|=r$;
\item $n\le3$ and $\sqrt{n}U$ is a complex Hadamard matrix.
\end{itemize}
Here a matrix $H=(h_{ij})$ is said to be {\it complex Hadamard} if $HH^\ast=nI_n$ and $|h_{ij}|=1$ for all $i,j$.
We call $U$ {\it flat} when $\sqrt{n}U$ is complex Hadamard. The uniform-minor condition has a natural interpretation through exterior powers: the $(I,J)$-entry of $\bigwedge^r U$ is $\det U[I,J]$. It therefore requires every exterior power of $U$ to have entries of equal modulus. For $r=1$, this is precisely flatness. Flatness together with the condition for $r=2$ already forces $n\le3$, so exact finite free position of two MASAs is impossible in every dimension $n\ge4$.

To measure this obstruction, we introduce the {\it uniform-minor discrepancy}
$$
\delta_r(U) :=\sum_{\substack{I,J\subset[n]\\|I|=|J|=r}}\left(|\det U[I,J]|^2-\frac{1}{N_{n,r}}\right)^2,
\qquad 1\le r\le n,
$$
and set $\delta_0(U):=0$. Thus $\delta_r(U)=0$ precisely when the minors of order $r$ have the uniform modulus required by finite free position. Our second main result describes the structure of these discrepancies across different orders. Set
$$
P_r(U):=\left(|\det U[I,J]|^2\right)_{|I|=|J|=r}.
$$
These matrices are doubly stochastic and satisfy $D_rP_r(U)=P_{r-1}(U)D_r$, where $D_r$ is the incidence matrix between $(r-1)$-subsets and $r$-subsets of $[n]$. For the centered matrices
$$
\Delta_r(U):= P_r(U)-\frac{1}{N_{n,r}}J_{N_{n,r}},
$$
we obtain an orthogonal decomposition
$$
\Delta_r(U)\simeq\Delta_{r-1}(U)\oplus R_r(U), \qquad 2\le r\le\lfloor n/2\rfloor,
$$
for a suitable operator $R_r(U)$, where $\simeq$ denotes unitary equivalence.  Since $\delta_r(U)=\|\Delta_r(U)\|_F^2$, where $\|A\|_F:=\left(\sum_{i,j}|a_{ij}|^2\right)^{1/2}$ is the Frobenius norm of a matrix $A=(a_{ij})$, this decomposition gives
$$
\delta_r(U)=\delta_{r-1}(U)+\|R_r(U)\|_F^2,
\qquad 2\le r\le\lfloor n/2\rfloor.
$$
Consequently, together with complementary symmetry, we obtain in Theorem~\ref{thm:monotonicity} that
$$
\delta_1(U)\le\delta_2(U)\le\cdots\le
\delta_{\lfloor n/2\rfloor}(U)
\qquad \text{and} \qquad
\delta_r(U)=\delta_{n-r}(U).
$$

The discrepancy also has a direct interpretation as an error in finite free multiplicative convolution.
Let $z_1,\ldots,z_n$ and $w_1,\ldots,w_n$ be independent Haar random variables on $\mathbb{T}$, and set $E_z:=\diag(z_1,\ldots,z_n)$ and $E_w:=\diag(w_1,\ldots,w_n)$.
Writing $\chi_A(x)=\sum_{r=0}^n(-1)^r {\sf e}_r(A)x^{n-r}$, we prove in Theorem~\ref{thm:L2-interpretation} that
$$
\delta_r(U) = \mathbb{E}\left|{\sf e}_r(E_zUE_wU^\ast)-\frac{{\sf e}_r(E_z){\sf e}_r(E_w)}{N_{n,r}}\right|^2.
$$
The second term inside the absolute value is the coefficient of $x^{n-r}$ in $\chi_{E_z}\boxtimes_n\chi_{E_w}$,
up to the sign $(-1)^r$. Thus $\delta_r(U)$ is exactly the mean-square error in the coefficient of $x^{n-r}$ for the fixed relative position $U$.

Finally, we obtain a sharp quantitative bound for flat unitaries. For $n\ge4$ and $H:=\sqrt{n}U$ complex Hadamard, we obtain in Theorem~\ref{thm:UMD} that
$$
\delta_2(U) = \frac{n-3}{2n} + \frac{1}{2n^4} \left\|H^{\circ2}(H^{\circ2})^\ast-nI_n\right\|_F^2,
$$
where $H^{\circ2}$ denotes the entrywise square.
Consequently,
$$
\delta_2(U)\ge\frac{n-3}{2n},
$$
with equality precisely when $H^{\circ2}$ is also complex Hadamard, that is, when $H$ is an S-Hadamard matrix \cite{L19}.
Together with monotonicity and complementary symmetry, this yields
$$
\delta_r(U)\ge\frac{n-3}{2n}, \qquad 2\le r\le n-2.
$$
For $r=2$, normalized Fourier matrices attain the lower bound in every odd dimension $n\ge5$. In dimension $4$, we determine the exact minimum over flat unitaries and obtain $\min_{U\in{\cal U}_4^{\rm flat}}\delta_2(U)=1/4$, which is strictly larger than the universal lower bound. We also compute $\delta_2$ for Fourier matrices of even order and derive its exact behaviour under tensor products.

The paper is organized as follows. In Section~\ref{sec2}, we establish the exterior-power and incidence identities. In Section~\ref{sec3}, we prove the classification of finite free position for pairs of MASAs. In Section~\ref{sec4}, we develop the discrepancy, its monotonicity, and its interpretation as a coefficient error. In Section~\ref{sec5} we prove the explicit formula for $\delta_2$ and study its equality cases and extremal behaviour.


\section{Exterior powers and incidence relations}\label{sec2}

Let $\{e_i\}_{i=1}^n$ be the standard orthonormal basis of $\mathbb{C}^n$.
For $1\le r\le n$ and $I=\{i_1<i_2<\cdots<i_r\}\subset[n]$, we define an $r$-vector
$$
\bigwedge_{i\in I} e_i:=e_{i_1}\wedge e_{i_2}\wedge\cdots\wedge e_{i_r},
$$
where $\wedge$ is the exterior product. 
The $r$-th exterior power of $\mathbb{C}^n$ is defined by
$$
\bigwedge^r\mathbb{C}^n:=\Span\left\{\bigwedge_{i\in I} e_i:I\subset[n],\ |I|=r \right\}.
$$
Equipped with the standard inner product, $\bigwedge^r\mathbb{C}^n$ has dimension $\Dim\bigwedge^r\mathbb{C}^n=N_{n,r}$ and $\{\bigwedge_{i\in I} e_i:I\subset[n],\ |I|=r\}$ forms an orthonormal basis.

For $U\in\mathcal U_n$, we define the $r$-th exterior power $\bigwedge^r U$ by
$$
\left(\bigwedge^r U\right)
(v_1\wedge\cdots\wedge v_r):=Uv_1\wedge\cdots\wedge Uv_r,
\qquad v_1,\ldots,v_r\in\mathbb{C}^n.
$$
Since $U$ is unitary, $\bigwedge^r U$ is also unitary. Moreover, its matrix representation with respect to the above orthonormal basis is given as follows.

\begin{prop}\label{prop:exterior}
Let $U=(u_{ij})_{i,j=1}^n\in\mathcal U_n$.
For every $J\subset[n]$ with $|J|=r$, we have
\begin{align}\label{eq:rep}
\left(\bigwedge^r U\right)\bigwedge_{j\in J} e_j
=\sum_{\substack{I\subset[n]\\ |I|=r}} \Det U[I,J]\,\bigwedge_{i\in I} e_i.
\end{align}
\end{prop}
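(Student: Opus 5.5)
The plan is to compute the left-hand side of \eqref{eq:rep} directly from the definition of $\bigwedge^r U$ and multilinearity. Write $J=\{j_1<\cdots<j_r\}$. By definition,
$$
\left(\bigwedge^r U\right)\bigwedge_{j\in J} e_j = Ue_{j_1}\wedge\cdots\wedge Ue_{j_r},
\qquad Ue_{j_k}=\sum_{i=1}^n u_{i j_k} e_i .
$$
I will expand the wedge product multilinearly:
$$
Ue_{j_1}\wedge\cdots\wedge Ue_{j_r}=\sum_{i_1,\ldots,i_r\in[n]} u_{i_1j_1}\cdots u_{i_rj_r}\, e_{i_1}\wedge\cdots\wedge e_{i_r}.
$$
By the alternating property, every term with a repeated index vanishes. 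So only injective tuples $(i_1,\ldots,i_r)$ survive.

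Next, I will group the surviving tuples by their image set $I=\{i_1,\ldots,i_r\}$, which has $|I|=r$. Write $I=\{a_1<\cdots<a_r\}$. Each injective tuple with image $I$ has the form $(a_{\sigma(1)},\ldots,a_{\sigma(r)})$ for a unique $\sigma\in S_r$. Since the wedge product is alternating,
$$
e_{a_{\sigma(1)}}\wedge\cdots\wedge e_{a_{\sigma(r)}}={\rm sgn}(\sigma)\,\bigwedge_{i\in I}e_i .
$$
The coefficient of $\bigwedge_{i\in I}e_i$ is therefore
$$
\sum_{\sigma\in S_r}{\rm sgn}(\sigma)\prod_{k=1}^r u_{a_{\sigma(k)}j_k}.
$$
By the Leibniz formula this equals $\det U[I,J]$; if necessary I will apply it to the transpose, which has the same determinant.

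The only point needing care is the sign bookkeeping in the reordering step. This is also where the convention of increasing enumeration of $I$ and $J$ enters. Unitarity of $U$ is not used: the identity holds for any $U\in{\cal M}_n$. Finally, the vectors $\bigwedge_{i\in I}e_i$ form an orthonormal basis, so this expansion gives exactly the $(I,J)$-entry of the matrix representation of $\bigwedge^r U$, namely $\det U[I,J]$.
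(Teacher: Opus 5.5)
Your proposal is correct and follows the paper's proof: you expand $Ue_{j_1}\wedge\cdots\wedge Ue_{j_r}$ multilinearly, discard repeated indices by the alternating property, and identify the coefficient of $\bigwedge_{i\in I}e_i$ with $\det U[I,J]$ via the Leibniz formula. Your side remark that unitarity plays no role in this identity is also accurate.
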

\begin{proof}
Let $J=\{j_1<j_2<\cdots<j_r\}\subset[n]$. Since $Ue_{j_k}=\sum_{i=1}^n u_{ij_k}e_i$ for each $1\le k\le r$, we have
\begin{align*}
\left(\bigwedge^r U\right)\bigwedge_{j\in J} e_j
=\sum_{i_1,\ldots,i_r=1}^n
u_{i_1j_1}\cdots u_{i_rj_r}
\,e_{i_1}\wedge\cdots\wedge e_{i_r}.
\end{align*}
If $i_k=i_\ell$ for some $k\neq \ell$, then $e_{i_1}\wedge\cdots\wedge e_{i_r}=0$. Thus, only the terms with pairwise distinct indices contribute.

Fix $I=\{i_1<i_2<\cdots<i_r\}\subset[n]$. By the alternating property of the exterior product, the coefficient of $\bigwedge_{i\in I} e_i$ in the above expansion is
\begin{align*}
\sum_{\sigma\in S_r}
{\sf sgn}(\sigma)
\prod_{k=1}^r u_{i_{\sigma(k)}j_k}=\Det (u_{i_pj_q})_{1\le p,q\le r}=\Det U[I,J].
\end{align*}
Therefore, the matrix representation \eqref{eq:rep} is obtained.
\end{proof}

By Proposition~\ref{prop:exterior}, with respect to the orthonormal basis $\{\bigwedge_{i\in I} e_i:I\subset[n],\ |I|=r\}$ of $\bigwedge^r\mathbb{C}^n$, the $(I,J)$-entry of $\bigwedge^r U$ is $\Det U[I,J]$. For any $U\in  {\cal U}_n$ and $1\le r\le n$, we define
\begin{align*}
P_r(U):=(|\Det U[I,J]|^2)_{|I|=|J|=r} \in {\cal M}_{N_{n,r}},
\end{align*}
and $P_0(U):=(1)$. By Proposition~\ref{prop:exterior}, we have
$$
P_r(U)=\left|\bigwedge^rU\right|^{\circ2},
$$
where the absolute value is taken entrywise. Since $\bigwedge^rU$ is unitary, the matrix $P_r(U)$ is doubly stochastic, i.e.
\begin{align}\label{eq:doubly}
\sum_{|I|=r}|\Det U[I,J]|^2=\sum_{ |J|=r} |\Det U[I,J]|^2=1.
\end{align}

For $2\le r\le n$, define the inclusion matrix $D_r \in {\cal M}_{N_{n,r-1} \times N_{n,r}}$ by
\begin{align}\label{eq:D}
(D_r)_{A,I} := \begin{cases}
1, & A\subset I\\
0, & A \not\subset I,
\end{cases}
\qquad |A|=r-1, \ |I|=r,
\end{align}
and $D_1:=\begin{pmatrix}
1 & 1 & \cdots & 1
\end{pmatrix} \in {\cal M}_{1\times n}$. The matrices $P_r(U)$ satisfy a natural consistency relation.
\begin{lem}\label{lem:DP}
For any $1\le r \le n$ and $U\in  {\cal U}_n$, we have
$$
D_rP_r(U)=P_{r-1}(U)D_r.
$$
\end{lem}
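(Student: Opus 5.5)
We compare the $(A,J)$-entries of both sides for fixed $A\subset[n]$ with $|A|=r-1$ and $J\subset[n]$ with $|J|=r$. The left side is
$$
(D_rP_r(U))_{A,J}=\sum_{I\supset A,\ |I|=r}|\Det U[I,J]|^2=\sum_{i\notin A}|\Det U[A\cup\{i\},J]|^2 .
$$
The right side is
$$
(P_{r-1}(U)D_r)_{A,J}=\sum_{B\subset J,\ |B|=r-1}|\Det U[A,B]|^2=\sum_{j\in J}|\Det U[A,J\setminus\{j\}]|^2 .
$$
So the claim is that both quantities equal a single intrinsic expression. The case $r=1$ is immediate: $D_1P_1(U)$ consists of the column sums of $|u_{ij}|^2$, which are all $1$ by unitarity, and $P_0(U)D_1=D_1$.

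For $r\ge2$ we use exterior-algebra inner products. Write $e_A:=\bigwedge_{a\in A}e_a$ and $u_j:=Ue_j$. By Proposition~\ref{prop:exterior}, $\Det U[I,J]=\langle e_I,\ \bigwedge_{j\in J}u_j\rangle$ with $\bigwedge_{j\in J}u_j=(\bigwedge^rU)e_J$. Consider the interior product (contraction) $\iota_{e_A}^*:\bigwedge^r\mathbb{C}^n\to\mathbb{C}^n$, the adjoint of $v\mapsto e_A\wedge v$. Then
$$
\sum_{i\notin A}|\Det U[A\cup\{i\},J]|^2=\sum_{i=1}^n\bigl|\langle e_A\wedge e_i,\ w_J\rangle\bigr|^2=\bigl\|\iota_{e_A}^* w_J\bigr\|^2,
\qquad w_J:=(\bigwedge^rU)e_J .
$$
Terms with $i\in A$ vanish, and the reordering sign of $e_A\wedge e_i$ relative to $e_{A\cup\{i\}}$ disappears under $|\cdot|^2$.

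Applying the same construction to $U^*$, with $\Det U[A,B]=\overline{\Det U^*[B,A]}$, gives
$$
\sum_{j\in J}|\Det U[A,J\setminus\{j\}]|^2=\sum_{j}\bigl|\langle (\bigwedge^{r-1}U^*)e_A\wedge e_j,\ e_J\rangle\bigr|^2 .
$$
Since $\bigwedge^rU^*$ is unitary and respects wedge products, we have $\langle (\bigwedge^{r-1}U^*)e_A\wedge e_j, e_J\rangle=\langle e_A\wedge u_j, w_J\rangle$. Hence the right side equals
$$
\sum_j|\langle e_A\wedge u_j,w_J\rangle|^2=\sum_j|\langle u_j,\ \iota_{e_A}^*w_J\rangle|^2=\|\iota_{e_A}^*w_J\|^2,
$$
because $\{u_j\}$ is an orthonormal basis. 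The left side equals the same quantity by the computation above, since $\{e_i\}$ is also an orthonormal basis. So the two entries agree.

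The main obstacle is verifying the identity $\langle (\bigwedge^{r-1}U^*)e_A\wedge e_j, e_J\rangle=\langle e_A\wedge Ue_j,\ (\bigwedge^rU)e_J\rangle$, together with the sign bookkeeping. The identity follows by applying the unitary $\bigwedge^rU$ to both arguments and using $(\bigwedge^rU)(x\wedge y)=(\bigwedge^{r-1}U)x\wedge Uy$. The signs are harmless because only squared moduli appear.
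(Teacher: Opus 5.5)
Your proof is correct. It is the same underlying identity as the paper's, but you establish it with exterior-algebra machinery where the paper uses cofactor expansion.

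\textbf{The paper's argument.} It fixes $A,J$ with $J=\{j_1<\cdots<j_r\}$ and sets $V:=U[[n],J]$, so that $V^*V=I_r$. It lets $c\in\mathbb{C}^r$ be the cofactor vector of $W:=U[A,J]$, that is, $c_k=(-1)^{r+k}\det U[A,J\setminus\{j_k\}]$. Laplace expansion along a row $x_i$ of $V$ appended to $W$ shows that $(Vc)_i=\pm\det U[A\cup\{i\},J]$ for $i\notin A$, and $(Vc)_i=0$ for $i\in A$ (repeated row). Then $\|Vc\|^2=\|c\|^2$ gives the lemma.

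\textbf{The correspondence.} Your vector $\iota_{e_A}^*w_J$ is exactly this $Vc$. Its $i$-th standard coordinate is the coefficient of $e_A\wedge e_i$ in $w_J$. Its coordinates in the orthonormal basis $\{u_j\}$ are $c_k$ at $u_{j_k}$ and $0$ at $u_j$ for $j\notin J$. So your two Parseval expansions are the paper's two evaluations of $\|Vc\|$.

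\textbf{What each route buys.} The routes differ only in how the $(r-1)$-minors are recognised as coordinates. The paper needs only the Laplace expansion and the isometry $V$, so it is fully elementary. You use $(\bigwedge^rU)(x\wedge y)=(\bigwedge^{r-1}U)x\wedge Uy$ and the unitarity of $\bigwedge^rU$. This removes the cofactor-sign bookkeeping and exhibits the lemma as a change of orthonormal basis, at the cost of invoking standard facts about adjoints of wedge maps.

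\textbf{One terse step.} The identity $\sum_{B\subset J}|\det U[A,B]|^2=\sum_j|\langle y\wedge e_j,e_J\rangle|^2$, with $y=(\bigwedge^{r-1}U^*)e_A$, is not literally the same computation as on the left side. It is the dual one: $\langle y\wedge e_j,e_J\rangle=\pm\langle y,e_{J\setminus\{j\}}\rangle$ for $j\in J$ and $0$ for $j\notin J$. It deserves a line.
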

\begin{proof}
First, we consider $r=1$. Then 
\begin{align*}
(D_1P_1(U))_{1,j}=\sum_{i=1}^n |u_{ij}|^2 = 1 = (P_0(U) D_1)_{1,j}
\end{align*}
for all $j=1,\dots, n$. Now suppose that $2\le r \le n$. Fix $A, J \subset[n]$ with $|A|=r-1$ and $|J|=r$. The $(A,J)$-entry of $D_rP_r(U)$ is given by
\begin{align*}
(D_rP_r(U))_{A,J}=\sum_{|I|=r} (D_r)_{A,I} (P_r(U))_{I,J} = \sum_{\substack{|I|=r\\ A\subset I}} |\Det U[I,J]|^2.
\end{align*}
On the other hand, we have
\begin{align*}
(P_{r-1}(U)D_r)_{A,J}= \sum_{|B|=r-1} (P_{r-1}(U))_{A,B} (D_r)_{B,J}=\sum_{\substack{|B|=r-1\\B\subset J}}|\Det U[A,B]|^2.
\end{align*}
We prove the equality as follows. Let us consider $J=\{j_1<\cdots < j_r\}$ and define $V:=U[[n],J]\in {\cal M}_{n\times r}$. Since $U$ is unitary, we have $V^\ast V=I_r$. Moreover, we define $W:=V[A,[r]]=U[A,J]\in {\cal M}_{(r-1)\times r}$. A matrix $W_{\overline{k}} \in {\cal M}_{r-1}$ denotes the submatrix of $W$ obtained by deleting its $k$-th column. Note that $W_{\overline{k}}=U[A, J\setminus \{j_k\}]$. Define $c_k:=(-1)^{r+k}\ \Det W_{\overline{k}}$ for all $k=1,\dots ,r$ and $c=(c_1,\dots, c_r)^T \in \mathbb{C}^r$.
If we write the $i$-th row of $V$ as $x_i=(x_{i,1},\dots, x_{i,r}):=(u_{i,j_1},\dots, u_{i,j_r})$ for each $i\in [n]$, then
$$
\Det W[x_i]= \sum_{k=1}^r (-1)^{r+k}x_{i,k} \ \Det W_{\overline{k}} = x_i c, \quad \text{where} \quad W[x_i]:=\begin{pmatrix}
W\\
x_i
\end{pmatrix} \in {\cal M}_r.
$$
If $i\notin A$, then 
$$
|\Det U[A\cup \{i\},J]| = |\Det W[x_i]| = |x_ic|.
$$
If $i\in A$, then two rows of $W[x_i]$ coincide, and therefore $x_i c=0$. Consequently, we have
\begin{align*}
\sum_{\substack{|I|=r\\ A\subset I}}|\Det U[I,J]|^2 = \sum_{i\notin A} |x_ic|^2 = \sum_{i=1}^n |x_i c|^2=\|Vc\|^2 = \|c\|^2,
\end{align*}
where the last equality holds since $V^\ast V=I_r$. By definition, we have
\begin{align*}
\|c\|^2 &= \sum_{k=1}^r |c_k|^2 = \sum_{k=1}^r |\Det W_{\overline{k}}|^2\\
&=\sum_{k=1}^r |\Det U[A, J\setminus \{j_k\}]|^2 = \sum_{\substack{|B|=r-1\\B\subset J}}|\Det U[A,B]|^2,
\end{align*}
as desired.
\end{proof}


\section{Finite free position of MASAs}\label{sec3}
In this section, we give a characterization of unitary matrices to answer Problem~\ref{prob:main}. 

For $A\in {\cal M}_n$ and $I,J\subset [n]$ with $|I|=|J|$, the determinant $\Det A[I,J]$ is called a {\it minor} of $A$. In particular, it is called a {\it principal minor} of $A$ if $I=J$. A matrix $A\in {\cal M}_n$ is said to be {\it principally balanced} if for each $1\le r\le n$, all principal minors of order $r$ are equal, that is, $\Det A[I,I]$ does not depend on $I \subset [n]$ with $|I|=r$. Let ${\cal B}_n$ be the set of all $n\times n$ principally balanced matrices; see \cite{HO17} for details. In \cite[Theorems 4.7 and 6.12]{ALR25}, Arizmendi, Lehner and Rosenmann showed that the additive and multiplicative finite free complements of ${\cal D}_n$ both coincide with ${\cal B}_n$. Thus, a matrix is in additive (respectively, multiplicative) finite free position with every diagonal matrix if and only if it is principally balanced. Consequently, ${\cal D}_n$ and $U{\cal D}_nU^*$ are in additive (respectively, multiplicative) finite free position if and only if
\begin{align}\label{prob:main2}
U{\cal D}_nU^*\subseteq{\cal B}_n.
\end{align}
For $n\geq 2$, the inclusion $U{\cal D}_nU^*\subseteq{\cal B}_n$ does not hold for every $U\in{\cal U}_n$. Indeed, if $U=I_n$, then $U{\cal D}_nU^*={\cal D}_n$, which is not contained in ${\cal B}_n$. To answer Problem~\ref{prob:main} (equivalently, \eqref{prob:main2}), we denote by ${\cal H}_n$ the set of all $n\times n$ complex Hadamard matrices.
\begin{thm}\label{thm:MASA}
The following conditions are equivalent for $n\in \mathbb{N}$ and $U\in {\cal U}_n$.
\begin{enumerate}[\rm (1)]
\item $U{\cal D}_nU^\ast \subseteq{\cal B}_n$.
\item For any $1\le r \le n$ and for any $I,J\subset [n]$ with $|I|=|J|=r$, we have
\begin{align}\label{eq:uniform-minor}
|\Det U[I,J]|^2 = \frac{1}{N_{n,r}}.
\end{align}
\item For any $1\le r \le n$, we have $\sqrt{N_{n,r}} \left(\bigwedge^r U \right) \in {\cal H}_{N_{n,r}}$.
\item $n\le 3$ and $\sqrt{n}U \in{\cal H}_n$.
\end{enumerate}
\end{thm}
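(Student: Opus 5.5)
Our plan is to prove the cycle (1)$\Leftrightarrow$(2)$\Leftrightarrow$(3) and then (2)$\Leftrightarrow$(4).

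\emph{(2)$\Leftrightarrow$(3).} This equivalence is immediate from Proposition~\ref{prop:exterior}. The $(I,J)$-entry of $\bigwedge^rU$ is $\Det U[I,J]$, and $\bigwedge^rU$ is unitary. Hence $\sqrt{N_{n,r}}\bigwedge^rU$ is complex Hadamard exactly when all its entries have modulus one, which is condition \eqref{eq:uniform-minor}.

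\emph{(1)$\Leftrightarrow$(2).} For $A=\diag(a_1,\dots,a_n)$, Cauchy--Binet gives
$$
\Det (UAU^\ast)[I,I]=\sum_{|K|=r}|\Det U[I,K]|^2\prod_{k\in K}a_k .
$$
The monomials $\prod_{k\in K}a_k$, $|K|=r$, are linearly independent as polynomial functions of $(a_1,\dots,a_n)$. So $UAU^\ast$ is principally balanced for all $A\in{\cal D}_n$ if and only if, for each $r$ and $K$, the quantity $|\Det U[I,K]|^2$ is independent of $I$. In that case each column of $P_r(U)$ is constant, and double stochasticity \eqref{eq:doubly} forces the common value to be $1/N_{n,r}$. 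The converse direction is the same computation read backwards.

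\emph{(2)$\Rightarrow$(4).} This is the main step. Taking $r=1$ gives $|u_{ij}|^2=1/n$, so $\sqrt nU\in{\cal H}_n$. For $n\ge4$ we derive a contradiction from the $r=2$ condition. Write $H=\sqrt nU$. Then
$$
|\Det U[\{i,k\},\{j,l\}]|^2=\frac{1}{n^2}\bigl|h_{ij}h_{kl}-h_{il}h_{kj}\bigr|^2=\frac{2}{n^2}\bigl(1-\mathrm{Re}\,\theta\bigr),
$$
where $\theta=h_{ij}h_{kl}\overline{h_{il}h_{kj}}$ has modulus one. Condition \eqref{eq:uniform-minor} requires this to equal $2/(n(n-1))$, that is, $\mathrm{Re}\,\theta=1-\frac{n}{n-1}=-\frac{1}{n-1}$ for every $2\times2$ minor.

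Next we dephase $H$ so that its first row and column are all ones; multiplying rows and columns by unimodular scalars does not change these moduli. Taking $i=1$, $j=1$ then gives $\mathrm{Re}\,h_{kl}=-1/(n-1)$ for all $k,l\ge2$. Orthogonality of row $k\ge2$ to row $1$ says $1+\sum_{l\ge2}h_{kl}=0$. Taking real parts yields $1-(n-1)\cdot\frac{1}{n-1}=0$, which is consistent, so we need more. Taking imaginary parts, $\sum_{l\ge 2}\mathrm{Im}\,h_{kl}=0$ with each $\mathrm{Im}\,h_{kl}=\pm s$, where $s=\sqrt{1-1/(n-1)^2}$. This forces $n-1$ to be even and the signs to be balanced. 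We then use the condition on minors with $i,k\ge2$: $\mathrm{Re}(h_{ij}h_{kl}\overline{h_{il}h_{kj}})=-1/(n-1)$, with each entry in $\{\omega,\bar\omega\}$ where $\omega=(-1+i s(n-1))/(n-1)$. Such a product is $1$, $\omega^{2}$-type, or $\omega^4$-type depending on the sign pattern. We expect that the required value $-1/(n-1)$ cannot be achieved for all minors once $n\ge4$. Failing a clean case analysis, we would instead sum the $r=2$ condition against the $r=1$ data: compute $\sum_{I,J}|\Det U[I,J]|^4$, or equivalently $\|P_2(U)\|_F^2=1$, in terms of $H^{\circ2}$. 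This expresses $\delta_2$ via $\|H^{\circ2}(H^{\circ2})^\ast\|_F^2$, and the bound $\|H^{\circ2}(H^{\circ2})^\ast\|_F^2\ge n^3$ (trace $n^2$ on an $n\times n$ PSD matrix) gives $\delta_2>0$ for $n\ge4$. We expect this computation to be the main obstacle.

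\emph{(4)$\Rightarrow$(2).} For $n\le3$ and $r=1$ the condition is flatness. For $r=n$ it holds since $|\Det U|=1$. For $n=3$, $r=2$, the $2\times2$ minors of $U$ are cofactors of $U$, so they equal $\Det U\cdot\overline{u_{ij}}$ up to sign (since $U^{-1}=U^\ast$). Their squared modulus is therefore $1/3=1/N_{3,2}$. The case $n=2$ needs only $r=1,2$. This completes the cycle.
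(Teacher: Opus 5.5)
\textbf{There is a genuine gap in (2)$\Rightarrow$(4) for $n\ge4$.} Your arguments for (1)$\Leftrightarrow$(2), (2)$\Leftrightarrow$(3) and (4)$\Rightarrow$(2) are correct and essentially the paper's. Your adjugate identity $\operatorname{adj}U=(\det U)\,U^\ast$ is the $n=3$ case of the Jacobi complementary-minor identity the paper uses.

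The step (2)$\Rightarrow$(4) for $n\ge4$ is the heart of the theorem, and you do not prove it. Your case analysis stops at ``we expect''. Your fallback is the formula $\delta_2(U)=\frac{n-3}{2n}+\frac{1}{2n^4}\|H^{\circ2}(H^{\circ2})^\ast-nI_n\|_F^2$, which the paper proves separately as Theorem~\ref{thm:UMD}. But the conclusion $\delta_2>0$ for $n\ge4$ depends entirely on the constant $\frac{n-3}{2n}$. Obtaining it needs the identity $\sum_{j<k}|z_j-z_k|^4=3n^2+\bigl|\sum_j z_j^2\bigr|^2$ for unimodular $z_j$ with $\sum_j z_j=0$, applied to $z_j=h_{pj}\overline{h_{qj}}$. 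You have not done this computation. As written, neither route is a proof. The fallback does work once carried out, and it gives the stronger bound $\delta_2\ge\frac{n-3}{2n}$, but it is far heavier than needed.

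\textbf{The missing idea.} You were one step from a complete argument. After dephasing, you used only the minors whose column set contains $1$. Take instead rows $\{1,k\}$ and columns $\{j,l\}$ with $2\le j<l$. Then $\theta=h_{kl}\overline{h_{kj}}$, and the minor equals $(h_{kl}-h_{kj})/n$. You have already shown that $h_{k2},\dots,h_{kn}\in\{\omega,\bar\omega\}$. For $n\ge4$ there are at least three of them, so two coincide. The corresponding $2\times2$ minor then vanishes, contradicting $|\det U[I,J]|^2=\frac{2}{n(n-1)}>0$. Your parity argument via imaginary parts becomes unnecessary.

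This is the paper's argument in another guise. The paper dephases only the first row. The minors with row set $\{1,2\}$ then force $|h_{2j}-h_{2k}|=\sqrt{2n/(n-1)}$ for all $j\ne k$. That is, $n$ pairwise equidistant points in $\mathbb{C}\cong\mathbb{R}^2$, which is impossible for $n\ge4$. The paper makes this precise through the Gram matrix $\frac{d^2}{2}(I_{n-1}+J_{n-1})$, which has rank $n-1>2$.

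\textbf{Your $\omega$-route also closes directly.} Put $c=-\frac1{n-1}$. Then $\mathrm{Re}\,\omega^{\pm2}=2c^2-1$ and $\mathrm{Re}\,\omega^{\pm4}=8c^4-8c^2+1$. These equal $c$ only for $c\in\{1,-\tfrac12,\tfrac{-1\pm\sqrt5}{4}\}$, and $\theta=1$ has real part $1$. None of these values is $-\frac1{n-1}$ when $n\ge4$. So every minor with all four indices $\ge2$ already violates the condition, and this handles both parities at once.
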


Before proving Theorem~\ref{thm:MASA}, we prepare a standard result which follows from the Cauchy--Binet formula.

\begin{lem}\label{lem:A}
For $U\in  {\cal U}_n$ and $D=\diag(d_1,\dots, d_n) \in {\cal D}_n$, we write $B=UDU^\ast$. Then, for any $I\subset [n]$ with $|I|=r$, we have
\begin{align*}
\Det B[I,I] = \sum_{|J|=r} |\Det U[I,J]|^2 d_J,
\end{align*}
where $d_J:=\prod_{j\in J} d_j$.
\end{lem}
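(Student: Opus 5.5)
The plan is to apply the Cauchy--Binet formula twice to the factorization $B[I,I]=U[I,[n]]\,D\,U^\ast[[n],I]$. This identity holds because $B=UDU^\ast$, and the rows indexed by $I$ of a product are the rows of the first factor. We regard $U[I,[n]]\in{\cal M}_{r\times n}$ and $DU^\ast[[n],I]\in{\cal M}_{n\times r}$. Cauchy--Binet then gives
$$
\Det B[I,I]=\sum_{|J|=r}\Det U[I,J]\,\Det\bigl(DU^\ast\bigr)[J,I].
$$

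Next we simplify the second factor. Since $D$ is diagonal, $(DU^\ast)[J,I]=D[J,J]\,U^\ast[J,I]$. Hence
$$
\Det (DU^\ast)[J,I]=d_J\,\Det U^\ast[J,I].
$$

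Finally we relate $U^\ast[J,I]$ to $U[I,J]$. The $(j,i)$-entry of $U^\ast$ is $\overline{u_{ij}}$, so $U^\ast[J,I]=\overline{U[I,J]}^{\,T}$. Therefore $\Det U^\ast[J,I]=\overline{\Det U[I,J]}$. Multiplying by $\Det U[I,J]$ gives $|\Det U[I,J]|^2 d_J$, which yields the stated formula.

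No step is a genuine obstacle. The only care needed is the index bookkeeping, namely that the submatrices line up correctly in Cauchy--Binet and that the conjugate transpose is taken on the correct index sets. Unitarity of $U$ is not used; the identity holds for any $U$ with $U^\ast$ in place of $U^{-1}$.
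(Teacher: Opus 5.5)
Your proposal is correct and is essentially the paper's argument. The paper applies Cauchy--Binet directly to the triple product $U[I,[n]]\,D\,U[I,[n]]^\ast$, whereas you apply it once to $U[I,[n]]\cdot(DU^\ast)[[n],I]$ and pull out the diagonal factor $d_J$ by hand (so your opening ``twice'' is really ``once''); your remark that unitarity is not needed is also right.
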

\begin{proof}
It is easy to see that $B[I,I] = U[I,[n]] D U[I,[n]]^\ast$.
By the Cauchy--Binet  formula (see e.g. \cite[(3.14)]{Tao12}), we obtain
\begin{align*}
\Det B[I,I]  
&= \sum_{|J|=r} \left( \Det U[I,J] \right) \left( \Det D[J,J] \right) \left( \Det U[I,J]^\ast \right)\\
&= \sum_{|J|=r} |\Det U[I,J]|^2 \prod_{j\in J} d_j.
\end{align*}
\end{proof}

We now prove Theorem~\ref{thm:MASA}.

\begin{proof}[Proof of Theorem~\ref{thm:MASA}]
{\bf (1) $\Leftrightarrow$ (2):}
First, we assume that $U {\cal D}_n U^\ast \subseteq {\cal B}_n$ for $n\in \mathbb{N}$ and $U\in  {\cal U}_n$. For any $D=\diag(d_1,\dots, d_n)\in {\cal D}_n$, we have $B=UDU^\ast \in {\cal B}_n$. Therefore, for any $I_1,I_2 \subset [n]$ with $|I_1|=|I_2|=r$, we get $\Det B[I_1,I_1] =\Det B[I_2,I_2]$. By Lemma~\ref{lem:A}, this means that
$$
\sum_{|J|=r} |\Det U[I_1,J]|^2 d_J = \sum_{ |J|=r} |\Det U[I_2,J]|^2 d_J.
$$
Since $d_1,\dots, d_n$ are independent variables and the squarefree monomials $d_J$ are distinct, comparison of coefficients yields $|\Det U[I_1,J]|=|\Det U[I_2,J]|$ for any $J\subset [n]$ with $|J|=r$. By \eqref{eq:doubly}, we obtain
$$
|\Det U[I,J]|^2 = \frac{1}{N_{n,r}}
$$
for any $I, J \subset [n]$ with $|I|=|J|=r$. 

Conversely, we assume that the statement (2) holds. By Lemma~\ref{lem:A}, for any $1\le r\le n$ and $I\subset [n]$ with $|I|=r$, we have
\begin{align*}
\Det B[I,I] = \sum_{|J|=r} |\Det U[I,J]|^2 d_J = \frac{1}{N_{n,r}}\sum_{|J|=r} d_J,
\end{align*}
which does not depend on $I$. Hence $B\in {\cal B}_n$.\\

\hspace{-6mm}{\bf (2) $\Leftrightarrow$ (3):} It follows from Proposition~\ref{prop:exterior} and the definition of complex Hadamard matrices. \\

\hspace{-6mm}{\bf (2) $\Leftrightarrow$ (4):} First, assume that (2) holds. The case $n=1$ is trivial. Suppose now that $n\ge 2$. For any $I=\{i\}, J=\{j\} \subset [n]$, we have $|u_{ij}|^2 = |\Det U[I,J]|^2 = n^{-1}$.
Therefore, we have $|u_{ij}|=n^{-1/2}$, and also $|\sqrt{n} u_{ij}| = 1$ for any $1\le i,j\le n$. Since $U$ is unitary, we get $\sqrt{n} U \in {\cal H}_n$. We denote by $e^{{\rm i}\theta_k}/\sqrt{n}$ $(\theta_k\in \mathbb{R})$ the $(1,k)$-entry of $U$ for each $1\le k \le n$. Without loss of generality, we may assume that every entry in the first row of $U$ is $1/\sqrt{n}$ since $|\Det U[I,J]| =\left|{\sf det}((UE)[I,J])\right|$ for any $I,J\subset [n]$ with $|I|=|J|$, where $E=\diag(e^{-{\rm i}\theta_1},e^{-{\rm i}\theta_2},\dots, e^{-{\rm i}\theta_n})$. We denote by $z_k/\sqrt{n}$ ($|z_k|=1$) the $(2,k)$-entry of $U$ for each $1\le k \le n$. If $I=\{1,2\}$ and $J=\{j,k\}$ for some $j,k\in [n]$ with $j\neq k$, then
$$
|\Det U[I,J]|^2 = \frac{1}{n^2}|z_j-z_k|^2.
$$
Since the condition (2) holds true, we have $\frac{1}{n^2}|z_j-z_k|^2 = \binom{n}{2}^{-1}$.
This implies that,
\begin{align}\label{eq:zkzj}
|z_j-z_k| = \sqrt{\frac{2n}{n-1}}, \qquad j,k\in[n],\quad j\ne k.
\end{align}
Equation \eqref{eq:zkzj} forces $n\le 3$. Indeed, if we write $d= \sqrt{\frac{2n}{n-1}}$ and $v_j= z_j- z_n$ for $j=1,\dots, n-1$, then $\|v_j\|^2 =d^2$. For $j\neq k$, we have
$$
d^2 = \|v_j-v_k\|^2 = 2d^2 - 2\langle v_j , v_k\rangle,
$$
and therefore $\langle v_j, v_k\rangle =d^2/2$, where $\langle \cdot, \cdot \rangle$ is the real Euclidean inner product on $\mathbb{R}^2$. We define $G=(\langle v_i, v_j\rangle)_{1\le i,j\le n-1}$ as the Gram matrix for $\{v_1,\dots, v_{n-1}\}$. Then we can write
$$
G= \frac{d^2}{2} (I_{n-1} + J_{n-1}).
$$
A simple computation shows that $\Rank (I_{n-1}+J_{n-1})=n-1$, and hence $\Rank  (G)= n-1$. Since $G$ is a Gram matrix for $\{v_1,\dots, v_{n-1}\}\subset \mathbb{C} \simeq \mathbb{R}^2$, we must have $\Rank (G) \le 2$. Therefore $n-1 \le 2$, which leads to $n\le 3$.

Next, we assume that $n\le 3$ and $\sqrt{n} U \in {\cal H}_n$. The case $n=1$ is trivial. 
\begin{itemize}
\item {\bf The case $n=2$:} it suffices to consider the case $r=1$. Since $\sqrt{2}U\in {\cal H}_2$, we have $|\Det U[I,J]|^2=|u_{ij}|^2 =  \binom{2}{1}^{-1}$ for any $I=\{i\}, J=\{j\} \subset [2]$. 
\item {\bf The case $n=3$:} it suffices to consider the cases $r=1$ and $r=2$. First consider $r=1$. Since $\sqrt{3}U\in {\cal H}_3$, we have $|\Det U[I,J]|^2=|u_{ij}|^2 = \binom{3}{1}^{-1}$ for any $I=\{i\}, J=\{j\}\subset [3]$. Next, we consider the case $r=2$. Suppose that $I,J\subset[3]$ satisfy $|I|=|J|=2$. By Jacobi's complementary minor identity and the unitarity of $U$, we have $|\Det U[I,J]|=|\Det U[I^c,J^c]|$. Consequently, the case $r=2$ reduces to the case $r=1$.
\end{itemize}
\end{proof}


\section{Uniform-minor discrepancy}\label{sec4}

By Theorem~\ref{thm:MASA}, finite free position between ${\cal D}_n$ and $U{\cal D}_nU^*$ is impossible for $n\geq 4$. This motivates us to introduce a quantitative measure of the failure of finite free position.

\begin{defn}\label{defn:UMD}
For $0\le r \le n$, we define the mapping $\delta_r:  {\cal U}_n \to [0,\infty)$ by
$$
\delta_r(U):= \sum_{\substack{I,J \subset [n]\\ |I|=|J|=r}} \left( |\Det U[I,J]|^2 - \frac{1}{N_{n,r}}\right)^2, \qquad U\in  {\cal U}_n,\  1\le r \le n
$$
and $\delta_0(U):= 0$. We call $\delta_r(U)$ the $r$-th {\it uniform-minor discrepancy} of $U$. 
\end{defn}

\begin{rem}
By Theorem~\ref{thm:MASA}, the condition $\delta_r(U)=0$ for every $0\leq r\leq n$ is equivalent to the condition that ${\cal D}_n$ and $U{\cal D}_nU^*$ are in finite free position.
\end{rem}

\subsection{Basic properties}
We record several elementary properties of the discrepancy. Define
$$
 {\cal U}_n^{\rm flat}:=\{U\in  {\cal U}_n: \sqrt{n}U \in {\cal H}_n\} \subset  {\cal U}_n.
$$
We call matrices in ${\cal U}_n^{\rm flat}$ {\it flat unitary matrices}.

\begin{lem}\label{lem:C}
Let $U\in  {\cal U}_n$.
\begin{enumerate}[\rm (1)]
\item $\delta_1(U)=0$ if and only if $U \in  {\cal U}_n^{\rm flat}$.
\item For $0\le r \le n$, we have $\delta_r(U)=\delta_{n-r}(U)$.
\item For $1\le r \le n$, we have $ \delta_r(U) = \sum_{ |I|=|J|=r}  |\Det U[I,J]|^4 -1$.
\item If $U_1,U_2$ are diagonal unitary matrices and $P_1,P_2$ are permutation matrices, then $\delta_r(U_1P_1UP_2U_2)=\delta_r(U)$ for every $0\le r\le n$.
\end{enumerate}
\end{lem}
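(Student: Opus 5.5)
I will prove the four items in the order (3), (1), (2), (4), since (3) simplifies the others.

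For (3), expand the square in Definition~\ref{defn:UMD}:
$$
\delta_r(U)=\sum_{|I|=|J|=r}|\Det U[I,J]|^4-\frac{2}{N_{n,r}}\sum_{|I|=|J|=r}|\Det U[I,J]|^2+\frac{N_{n,r}^2}{N_{n,r}^2}.
$$
By the doubly stochastic identity \eqref{eq:doubly}, the middle sum equals $N_{n,r}$. The middle term is therefore $-2$ and the last term is $+1$, which gives (3).

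For (1), $\delta_1(U)=0$ means every term vanishes, i.e.\ $|u_{ij}|^2=1/n$ for all $i,j$. Together with unitarity this is exactly $\sqrt nU\in{\cal H}_n$. The converse is immediate.

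For (2), I use Jacobi's complementary minor identity, which the proof of Theorem~\ref{thm:MASA} already invoked for $n=3$. For unitary $U$ it reads $\Det U[I,J]=\pm\Det U\cdot\overline{\Det U[I^c,J^c]}$, so $|\Det U[I,J]|=|\Det U[I^c,J^c]|$. The map $(I,J)\mapsto(I^c,J^c)$ is a bijection from pairs of $r$-subsets to pairs of $(n-r)$-subsets, and $N_{n,r}=N_{n,n-r}$. Hence the sums defining $\delta_r$ and $\delta_{n-r}$ agree termwise for $1\le r\le n-1$. The endpoint cases need a separate check. We have $\delta_n(U)=(|\Det U|^2-1)^2=0=\delta_0(U)$, since $|\Det U|=1$. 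The main point requiring care is this step: justifying the Jacobi identity in the unitary form, including the adjugate relation $U^{-1}=U^\ast$ and the fact that $|\Det U|=1$. It is standard, and I would cite it.

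For (4), take $V=U_1P_1UP_2U_2$. Then $V[I,J]$ is obtained from $U[\pi(I),\sigma(J)]$, for the permutations $\pi,\sigma$ induced by $P_1,P_2$, by permuting rows and columns and multiplying rows and columns by unimodular scalars. So $|\Det V[I,J]|=|\Det U[\pi(I),\sigma(J)]|$. Since $(I,J)\mapsto(\pi(I),\sigma(J))$ is a bijection on pairs of $r$-subsets, the sums defining $\delta_r(V)$ and $\delta_r(U)$ coincide. The case $r=0$ is trivial.
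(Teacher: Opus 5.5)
Your proof is correct and follows the paper's argument item by item. For (3) you expand the square and use the doubly stochastic identity; for (1) you read off the entrywise condition; for (2) you use Jacobi's complementary minor identity and check the endpoints via $|\Det U|=1$; for (4) you observe that unimodular diagonal scaling and permutations only rescale or permute the minors. Your opening remark that (3) simplifies the other items is never actually used, which is harmless.
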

\begin{proof}
\begin{enumerate}[\rm (1)]
\item It is clear by definition.
\item Consider $1\le r \le n-1$. Since $|\Det U[I,J]|=|\Det U[I^c,J^c]|$ for any $I,J\subset [n]$ with $|I|=|J|=r$, the desired result holds by definition. If $r=n$, then $I=J=[n]$, and therefore $|\Det U[I,J]|=|\Det U|=1$. Thus $\delta_n(U)=0=\delta_0(U)$.
\item Since $\sum_{ |I|=|J|=r}  |\Det U[I,J]|^2=N_{n,r}$, we have
\begin{align*}
\delta_r(U)  
&= \sum_{ |I|=|J|=r}  |\Det U[I,J]|^4 -2 N_{n,r}^{-1} \sum_{ |I|=|J|=r}  |\Det U[I,J]|^2 + N_{n,r}^{-2} \sum_{ |I|=|J|=r}   1\\
&=\sum_{ |I|=|J|=r}  |\Det U[I,J]|^4 -1. 
\end{align*}
\item Left and right multiplication by diagonal unitary matrices changes each minor only by a scalar of modulus one, while multiplication by permutation matrices only permutes the minors. Hence the collection $\{|\Det U[I,J]|:|I|=|J|=r\}$ is unchanged up to permutation, and so is $\delta_r(U)$.
\end{enumerate}
\end{proof}

For later use, we shall say that two complex Hadamard matrices are \emph{Hadamard equivalent} if one can be obtained from the other by left and right multiplication by diagonal unitary matrices and permutation matrices (see e.g. \cite{Haa97, TZ06}). By Lemma~\ref{lem:C} (4), $\delta_r$ is invariant under Hadamard equivalence.

\subsection{Monotonicity of the uniform-minor discrepancy}
In this section, we prove the monotonicity of $\delta_r$ with respect to $r$.
By Lemma~\ref{lem:C}, it suffices to investigate $\delta_r$ for $r=1, 2,\dots, \lfloor n/2\rfloor$. We define
$$
\Delta_r(U):=P_r(U)-\frac{1}{N_{n,r}}J_{N_{n,r}}.
$$
By definition, we get $\delta_r(U)=\|\Delta_r(U)\|_F^2$.

\begin{thm}\label{thm:monotonicity}
For every $n\ge 4$ and $U\in {\cal U}_n$,
$$
\delta_1(U)\le \delta_2(U) \le\cdots \le \delta_{\lfloor \frac{n}{2} \rfloor} (U),
$$
together with $\delta_r(U)=\delta_{n-r}(U)$ for all $0\le r \le n$.
\end{thm}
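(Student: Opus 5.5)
Symmetry $\delta_r(U)=\delta_{n-r}(U)$ is already Lemma~\ref{lem:C}(2), so only the monotonicity chain needs proof. I will show $\delta_{r-1}(U)\le\delta_r(U)$ for $2\le r\le\lfloor n/2\rfloor$ by building an orthogonal decomposition of $\Delta_r(U)$ from Lemma~\ref{lem:DP}.

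First, the centering is compatible with the intertwining. We have $D_rJ_{N_{n,r}}=(n-r+1)J_{N_{n,r-1},N_{n,r}}$ and $J_{N_{n,r-1}}D_r=rJ_{N_{n,r-1},N_{n,r}}$. Combined with $N_{n,r}/N_{n,r-1}=(n-r+1)/r$, this gives
\[
\frac{1}{N_{n,r}}D_rJ_{N_{n,r}}=\frac{1}{N_{n,r-1}}J_{N_{n,r-1}}D_r .
\]
Lemma~\ref{lem:DP} then yields $D_r\Delta_r(U)=\Delta_{r-1}(U)D_r$. Taking transposes, and using that $P_r(U^\ast)=P_r(U)^T$ while Lemma~\ref{lem:DP} applies equally to $U^\ast$, we also get $D_r\Delta_r(U)^T=\Delta_{r-1}(U)^TD_r$, i.e.\ $\Delta_r(U)D_r^T=D_r^T\Delta_{r-1}(U)$.

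Next I use the standard fact on inclusion matrices: for $r\le n/2$, the matrix $D_r$ has full row rank $N_{n,r-1}$. Indeed, $D_rD_r^T=(n-2r+2)I+D_{r-1}^TD_{r-1}$ is positive definite (Johnson scheme; $n-2r+2>0$ for $r\le \lfloor n/2\rfloor$). Let $M=D_rD_r^T$, which is invertible, and set $T:=D_r^TM^{-1/2}$, an isometry from $\mathbb{C}^{N_{n,r-1}}$ into $\mathbb{C}^{N_{n,r}}$. The two intertwining relations give $\Delta_r(U)\,\Ran D_r^T\subseteq \Ran D_r^T$ and $\Delta_r(U)^T\,\Ran D_r^T\subseteq\Ran D_r^T$. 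Hence $V:=\Ran D_r^T$ reduces the real matrix $\Delta_r(U)$, since invariance under the transpose is invariance of $V^\perp$. So $\Delta_r(U)$ is unitarily equivalent to $(\Delta_r|_V)\oplus R_r(U)$ with $R_r(U):=\Delta_r|_{V^\perp}$.

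It remains to identify $\Delta_r|_V$ with $\Delta_{r-1}(U)$. We have $\Delta_rD_r^T=D_r^T\Delta_{r-1}$, and also $\Delta_{r-1}$ commutes with $M$. The latter follows because $M=D_rD_r^T$ and $\Delta_{r-1}M=\Delta_{r-1}D_rD_r^T=D_r\Delta_rD_r^T=D_rD_r^T\Delta_{r-1}$. Consequently $\Delta_{r-1}$ commutes with $M^{-1/2}$, and $\Delta_rT=T\Delta_{r-1}$. Thus $\Delta_r|_V\simeq\Delta_{r-1}(U)$. Since the Frobenius norm is additive over orthogonal reducing decompositions and unitarily invariant, $\delta_r=\delta_{r-1}+\|R_r\|_F^2\ge\delta_{r-1}$.

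The main obstacle is the full-rank statement for $D_r$ together with the reducing-subspace argument. The latter needs both intertwinings, hence the step applying Lemma~\ref{lem:DP} to $U^\ast$, and the commutation of $\Delta_{r-1}$ with $D_rD_r^T$. The base case $r=1$ is trivial since $\delta_0=0$.
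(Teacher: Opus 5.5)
Your proposal is correct and follows essentially the same route as the paper. It uses the centered intertwining $D_r\Delta_r(U)=\Delta_{r-1}(U)D_r$ together with its transposed version obtained from $U^\ast$, the positive definiteness of $D_rD_r^T=D_{r-1}^TD_{r-1}+(n-2r+2)I$, and the commutation of $\Delta_{r-1}(U)$ with $D_rD_r^T$ to reach $\Delta_r(U)\simeq\Delta_{r-1}(U)\oplus R_r(U)$. Your isometry $T=D_r^TM^{-1/2}$ is simply the adjoint of the paper's coisometry $C_r=(D_rD_r^T)^{-1/2}D_r$, and your reducing subspace, the range of $D_r^T$, is the paper's range of $C_r^\ast$.
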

\begin{proof}
The symmetry was established in Lemma~\ref{lem:C} (2). We first observe that
\begin{align}\label{eq:DJ}
D_r\left( \frac{1}{N_{n,r}}J_{N_{n,r}}\right)=\left(\frac{1}{N_{n,r-1}}J_{N_{n,r-1}}\right)D_r,
\end{align}
where $D_r$ was defined in \eqref{eq:D}. Indeed, each $(r-1)$-element subset of $[n]$ is contained in exactly $n-r+1$ subsets of cardinality $r$, and hence every entry of $D_r( \frac{1}{N_{n,r}}J_{N_{n,r}})$ is equal to $(n-r+1) N_{n,r}^{-1}$. On the other hand, each $r$-element subset contains exactly $r$ subsets of cardinality $r-1$, and hence every entry of $(\frac{1}{N_{n,r-1}}J_{N_{n,r-1}})D_r$ is equal to $r N_{n,r-1}^{-1}$. Thus, the desired identity \eqref{eq:DJ} follows from $(n-r+1) N_{n,r}^{-1} = r N_{n,r-1}^{-1}$. Combining this identity \eqref{eq:DJ} with Lemma~\ref{lem:DP}, we obtain
\begin{align}\label{eq:DD}
D_r\Delta_r(U)=\Delta_{r-1}(U)D_r.
\end{align}
Applying the same identity to $U^*$ and using $P_r(U^*)=P_r(U)^T$, we obtain $D_r\Delta_r(U)^T=\Delta_{r-1}(U)^TD_r$. Taking the transpose gives
\begin{align}\label{eq:DD2}
\Delta_r(U)D_r^*=D_r^*\Delta_{r-1}(U).
\end{align}
It follows from \eqref{eq:DD} and \eqref{eq:DD2} that
$$
D_rD_r^*\Delta_{r-1}(U)=D_r \Delta_r(U)D_r^\ast=\Delta_{r-1}(U)D_rD_r^*.
$$
Thus,
\begin{align}\label{eq:DD3}
[D_rD_r^*,\Delta_{r-1}(U)]=0.
\end{align}

Suppose that $2\le r\le \left\lfloor\frac{n}{2}\right\rfloor$. Counting the entries gives
$D_r D_r^\ast  = D_{r-1}^\ast  D_{r-1} + (n-2r+2) I_{N_{n,r-1}}$. Since $D_{r-1}^*D_{r-1}$ is positive semidefinite and $n-2r+2>0$, the matrix $D_rD_r^*$ is positive definite. We may define
$C_r:=(D_rD_r^*)^{-1/2}D_r$. Then
$$
C_rC_r^*=(D_rD_r^*)^{-1/2}D_rD_r^*(D_rD_r^*)^{-1/2}=I_{N_{n,r-1}},
$$
so that $C_r$ is a coisometry. Since $\Delta_{r-1}(U)$ commutes with $D_rD_r^*$ by \eqref{eq:DD3}, it also commutes with $(D_rD_r^*)^{-1/2}$. Therefore, by \eqref{eq:DD},
\begin{align}
C_r\Delta_r(U)
&=(D_rD_r^*)^{-1/2}D_r\Delta_r(U)\\
&=(D_rD_r^*)^{-1/2}\Delta_{r-1}(U)D_r\\
&=\Delta_{r-1}(U)(D_rD_r^*)^{-1/2}D_r=\Delta_{r-1}(U)C_r \label{eq:CD}.
\end{align}
Similarly, by \eqref{eq:DD2},
\begin{align}\label{eq:CD2}
\Delta_r(U)C_r^*=C_r^*\Delta_{r-1}(U).
\end{align}
Since $C_rC_r^*=I_{N_{n,r-1}}$, the space $\mathbb{C}^{N_{n,r}}$ admits the orthogonal decomposition
$$
\mathbb{C}^{N_{n,r}}=\Ran C_r^*\oplus\Ker  C_r.
$$
By \eqref{eq:CD}, $\Ker  C_r$ is invariant under $\Delta_r(U)$, while by \eqref{eq:CD2}, $\Ran C_r^*$ is also invariant under $\Delta_r(U)$. Moreover, $C_r:\Ran C_r^*\longrightarrow\mathbb{C}^{N_{n,r-1}}$ is unitary, with inverse $C_r^*$. By \eqref{eq:CD} and \eqref{eq:CD2}, the restriction of $\Delta_r(U)$ to $\Ran C_r^*$ is unitarily equivalent to $\Delta_{r-1}(U)$. Consequently, there exists an operator $R_r(U)$ on $\Ker  C_r$ such that
\begin{align}\label{eq:iso}
\Delta_r(U)\simeq\Delta_{r-1}(U)\oplus R_r(U).
\end{align}
Taking the Frobenius norm in \eqref{eq:iso}, we obtain
\begin{align*}
\delta_r(U)
&=\|\Delta_r(U)\|_F^2\\
&=\|\Delta_{r-1}(U)\|_F^2+\|R_r(U)\|_F^2\\
&=\delta_{r-1}(U)+\|R_r(U)\|_F^2.
\end{align*}
Therefore, $\delta_{r-1}(U)\le \delta_r(U)$ for all $2\le r \le \left\lfloor n/2\right\rfloor$.
This completes the proof.
\end{proof}

\subsection{A finite free interpretation of the discrepancy}

We next make this relation quantitative and show that $\delta_r(U)$ has a direct interpretation in terms of finite free multiplicative convolution.

For $A\in {\cal M}_n$, we can write
$$
\chi_A(x)=\sum_{r=0}^n(-1)^r {\sf e}_r(A)x^{n-r}, \quad \text{ where} \quad
{\sf e}_r(A)=\sum_{|I|=r}\Det A[I,I],
$$
and ${\sf e}_0(A):=1$. In particular, if $D=\diag(d_1,\ldots,d_n)\in{\cal D}_n$, then ${\sf e}_r(D)=\sum_{|I|=r}\prod_{i\in I}d_i$.

Recall that the finite free multiplicative convolution satisfies
$$
(\chi_A\boxtimes_n\chi_B)(x)=\sum_{r=0}^n(-1)^r\frac{{\sf e}_r(A){\sf e}_r(B)}{N_{n,r}}x^{n-r}
$$
for $A,B\in{\cal M}_n$. The following identity shows that the terms appearing in Definition~\ref{defn:UMD} are precisely the coefficients of the error from finite free multiplicative convolution.

\begin{lem}
\label{lem:multiplicative-error}
Let $U\in {\cal U}_n$ and $D=\diag(d_1,\ldots,d_n), E=\diag(e_1,\ldots,e_n) \in {\cal D}_n$. Then, for every $0\leq r\leq n$,
\begin{align*}
{\sf e}_r(DUEU^*)-\frac{{\sf e}_r(D){\sf e}_r(E)}{N_{n,r}}=
\sum_{|I|=|J|=r}
\left(|\Det U[I,J]|^2-\frac{1}{N_{n,r}}\right)d_Ie_J,
\end{align*}
where $d_I:=\prod_{i\in I}d_i$ and $e_J:=\prod_{j\in J}e_j$.
\end{lem}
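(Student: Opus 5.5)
The plan is to compute ${\sf e}_r(DUEU^*)$ via the Cauchy--Binet formula, as in Lemma~\ref{lem:A}, and then to subtract the convolution term using the double stochasticity \eqref{eq:doubly}.

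\textbf{The case $r=0$.} Here both sides vanish: ${\sf e}_0=1$, $N_{n,0}=1$, and $\Det U[\emptyset,\emptyset]=1$. So we may assume $r\ge1$.

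\textbf{Step 1: Cauchy--Binet expansion.} Fix $I$ with $|I|=r$. Since $D$ is diagonal, we have $(DUEU^*)[I,I]=D[I,I]\,U[I,[n]]\,E\,U[I,[n]]^*$. Taking determinants and applying Cauchy--Binet to the product $U[I,[n]]EU[I,[n]]^*$, exactly as in the proof of Lemma~\ref{lem:A}, gives
$$
\Det (DUEU^*)[I,I]=d_I\sum_{|J|=r}|\Det U[I,J]|^2e_J .
$$
Summing over $I$ yields
$$
{\sf e}_r(DUEU^*)=\sum_{|I|=|J|=r}|\Det U[I,J]|^2d_Ie_J .
$$

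\textbf{Step 2: matching the convolution term.} The second term factors as
$$
\frac{{\sf e}_r(D){\sf e}_r(E)}{N_{n,r}}=\frac{1}{N_{n,r}}\Big(\sum_{|I|=r}d_I\Big)\Big(\sum_{|J|=r}e_J\Big)=\sum_{|I|=|J|=r}\frac{1}{N_{n,r}}d_Ie_J .
$$
Subtracting this from the expansion in Step 1 gives the claimed identity term by term.

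\textbf{Main obstacle.} There is no real obstacle. The only point needing care is the factorization of the principal submatrix: $(DX)[I,I]=D[I,I]X[I,I]$ holds because $D$ is diagonal, and $X[I,I]=U[I,[n]]EU[I,[n]]^*$ holds by definition of matrix multiplication. With these two facts, Lemma~\ref{lem:A} applies verbatim, with $d_j$ replaced by $e_j$.
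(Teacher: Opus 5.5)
Your proof is correct and is essentially the paper's argument: you pull out $d_I$ because $D$ is diagonal, expand ${\sf e}_r(DUEU^*)$ by Cauchy--Binet, write ${\sf e}_r(D){\sf e}_r(E)/N_{n,r}$ as a double sum over $|I|=|J|=r$, and subtract. Two small remarks: your opening line says you will use the double stochasticity of $P_r(U)$, but neither your steps nor the paper's proof need it, and your separate treatment of $r=0$ is a harmless addition.
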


\begin{proof}
Since $D$ is diagonal, we obtain $\Det(DUEU^*)[I,I]=d_I\Det(UEU^*)[I,I]$.
By the Cauchy--Binet formula, we have
$$
{\sf e}_r(DUEU^*)=\sum_{|I|=r}\Det(DUEU^*)[I,I]=\sum_{|I|=|J|=r}|\Det U[I,J]|^2d_Ie_J.
$$
On the other hand,
$$
\frac{{\sf e}_r(D){\sf e}_r(E)}{N_{n,r}}=\sum_{|I|=|J|=r}\frac{1}{N_{n,r}}d_Ie_J.
$$
Subtracting the two identities proves the assertion.
\end{proof}

Thus, for each $r$, the matrix
$$
\left(|\Det U[I,J]|^2-\frac{1}{N_{n,r}}\right)_{|I|=|J|=r}
$$
may be regarded as the error kernel for the $r$-th coefficient of finite free multiplicative convolution. 

We now obtain an $L^2$ interpretation of the uniform-minor discrepancy.
Let $z_1,\dots,z_n$ and $w_1,\dots,w_n$ be mutually independent random variables, each distributed according to the normalized Haar measure on $\mathbb{T}$. Define
$$
E_z:=\diag(z_1,\dots,z_n)
\qquad \text{and} \qquad
E_w:=\diag(w_1,\dots,w_n).
$$

\begin{thm}
\label{thm:L2-interpretation}
For every $U\in {\cal U}_n$ and $0\leq r\leq n$,
$$
\delta_r(U)=\mathbb{E}\left|{\sf e}_r(E_zUE_wU^*)-\frac{{\sf e}_r(E_z){\sf e}_r(E_w)}{N_{n,r}}\right|^2.
$$
Equivalently,
$$
\delta_r(U)= \left\|{\sf e}_r(E_zUE_wU^*)-\frac{{\sf e}_r(E_z){\sf e}_r(E_w)}{N_{n,r}}\right\|_{L^2(\mathbb{T}^{2n})}^2.
$$
\end{thm}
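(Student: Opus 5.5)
The plan is to apply Lemma~\ref{lem:multiplicative-error} with $D=E_z$ and $E=E_w$. Then use the orthonormality of the monomials $z_Iw_J$ in $L^2(\mathbb{T}^{2n})$. By Lemma~\ref{lem:multiplicative-error}, pointwise on $\mathbb{T}^{2n}$ we have
$$
X_r:={\sf e}_r(E_zUE_wU^*)-\frac{{\sf e}_r(E_z){\sf e}_r(E_w)}{N_{n,r}}=\sum_{|I|=|J|=r}a_{I,J}\,z_Iw_J,
\qquad a_{I,J}:=|\Det U[I,J]|^2-\frac{1}{N_{n,r}},
$$
where $z_I=\prod_{i\in I}z_i$ and $w_J=\prod_{j\in J}w_j$. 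The coefficients $a_{I,J}$ are real and deterministic, since they depend only on $U$.

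Next I expand
$$
\mathbb{E}|X_r|^2=\sum_{I,J,I',J'}a_{I,J}a_{I',J'}\,\mathbb{E}\bigl[z_I\overline{z_{I'}}\bigr]\,\mathbb{E}\bigl[w_J\overline{w_{J'}}\bigr].
$$
The factorization is justified because the $z$'s and $w$'s are mutually independent. For Haar-distributed independent $z_1,\dots,z_n$ on $\mathbb{T}$, we have $\mathbb{E}[z_I\overline{z_{I'}}]=\prod_i\mathbb{E}[z_i^{\epsilon_i}]$ with exponents $\epsilon_i\in\{-1,0,1\}$. Since $\mathbb{E}[z_i]=\mathbb{E}[\overline{z_i}]=0$ and $|z_i|=1$, this product equals $\mathbb{1}_{I=I'}$. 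The same holds for the $w$'s. Hence only the diagonal terms survive, and
$$
\mathbb{E}|X_r|^2=\sum_{|I|=|J|=r}a_{I,J}^2=\delta_r(U)
$$
by Definition~\ref{defn:UMD}.

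The cases $r=0$ and $r=n$ are checked separately. For $r=0$, both sides vanish, because ${\sf e}_0=1$ and $N_{n,0}=1$. For $r=n$, the sum in Definition~\ref{defn:UMD} has the single term with $a_{[n],[n]}=|\Det U|^2-1=0$. This agrees with the formula, so those cases need no extra argument.

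The $L^2$ reformulation is the same statement: the expectation is integration against the normalized Haar measure on $\mathbb{T}^{2n}$. No step is a real obstacle. The only point needing care is the orthonormality of the squarefree monomials $z_Iw_J$ for distinct pairs $(I,J)$, which follows from independence and $\mathbb{E}[z_i^k]=0$ for $k\ne0$.
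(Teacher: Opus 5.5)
Your proposal is correct and follows the paper's proof essentially verbatim. Like the paper, you apply Lemma~\ref{lem:multiplicative-error} with $D=E_z$ and $E=E_w$, then use the orthonormality of the monomials $z_Iw_J$ in $L^2(\mathbb{T}^{2n})$; your separate checks of $r=0$ and $r=n$ are consistent with the definitions and do no harm.
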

\begin{proof}
For $I,J\subset[n]$ with $|I|=|J|=r$, we put $z_I:=\prod_{i\in I}z_i$ and $w_J:=\prod_{j\in J}w_j$.
By Lemma~\ref{lem:multiplicative-error},
\begin{align*}
{\sf e}_r(E_zUE_wU^*)-\frac{{\sf e}_r(E_z){\sf e}_r(E_w)}{N_{n,r}}=
\sum_{|I|=|J|=r}\left(|\Det U[I,J]|^2-\frac{1}{N_{n,r}}\right)z_Iw_J.
\end{align*}
Since $z_1,\ldots,z_n$ are independent Haar random variables on $\mathbb{T}$, we have $\mathbb{E}[z_I\overline{z_{I'}}]= \delta_{I,I'}$. Similarly, we obtain $\mathbb{E}[w_J\overline{w_{J'}}]=\delta_{J,J'}$.
Hence $\{z_Iw_J:|I|=|J|=r\}$ is an orthonormal family in $L^2(\mathbb{T}^{2n})$. Therefore,
\begin{align*}
\mathbb{E}
\left|{\sf e}_r(E_zUE_wU^*)-\frac{{\sf e}_r(E_z){\sf e}_r(E_w)}{N_{n,r}}
\right|^2 
&=\sum_{|I|=|J|=r}\left(|\Det U[I,J]|^2-\frac{1}{N_{n,r}}\right)^2=\delta_r(U).
\end{align*}
\end{proof}

Theorem~\ref{thm:L2-interpretation} gives a direct finite free interpretation of the uniform-minor discrepancy. Namely, $\delta_r(U)$ is precisely the mean-square error in the $r$-th elementary symmetric coefficient between $E_zUE_wU^\ast$, with fixed relative position $U$, and the corresponding finite free multiplicative convolution.


\section{An explicit formula for the discrepancy $\delta_2$}\label{sec5}

We now restrict our attention to flat unitaries $U\in\mathcal {\cal U}_n^{\rm flat}$. By Lemma~\ref{lem:C} (1), we get $\delta_1(U)=0$. On the other hand, Theorem~\ref{thm:MASA} shows that, for $n\geq4$, the full uniform-minor condition cannot hold. Moreover, $\delta_2(U)$ is the first nontrivial discrepancy in the hierarchy
$$
0=\delta_1(U)\le \delta_2(U)\le \cdots \le \delta_{\lfloor n/2\rfloor}(U),
$$
by Theorem~\ref{thm:monotonicity}. It is therefore natural to study $\delta_2(U)$ explicitly for $U\in {\cal U}^{\rm flat}_n$. We begin with the following elementary identity.

\begin{lem}\label{lem:D}
For any $z_1,\dots, z_n \in \mathbb{T}$ with $\sum_{j=1}^n z_j =0$, we have
\begin{align*}
\sum_{1\le j<k\le n} |z_j-z_k|^4 = 3n^2 + \left|\sum_{j=1}^n z_j^2\right|^2.
\end{align*}
\end{lem}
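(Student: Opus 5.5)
The plan is a direct expansion of $|z_j-z_k|^4$ followed by summation, using $|z_j|=1$ throughout and the hypothesis $\sum_j z_j=0$ to kill the cross terms.

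\emph{Step 1: rewrite the summand.} Since $|z_j|=|z_k|=1$, we have
$$
|z_j-z_k|^2=2-z_j\overline{z_k}-\overline{z_j}z_k=2-2\,{\rm Re}(z_j\overline{z_k}).
$$
Put $a_{jk}:=z_j\overline{z_k}$, so that $|a_{jk}|=1$. Squaring gives
$$
|z_j-z_k|^4=(2-a_{jk}-\overline{a_{jk}})^2=4+a_{jk}^2+\overline{a_{jk}}^{\,2}+2-4a_{jk}-4\overline{a_{jk}}.
$$

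\emph{Step 2: symmetrize.} Each summand is symmetric in $(j,k)$ and vanishes when $j=k$. Hence
$$
\sum_{j<k}|z_j-z_k|^4=\frac12\sum_{j,k=1}^n\left(6-4a_{jk}-4\overline{a_{jk}}+a_{jk}^2+\overline{a_{jk}}^{\,2}\right).
$$

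\emph{Step 3: evaluate the sums.} Compute each piece over all pairs $(j,k)$:
\begin{itemize}
\item the constant term contributes $6n^2$;
\item $\sum_{j,k}a_{jk}=\left|\sum_j z_j\right|^2=0$ by hypothesis, and likewise for its conjugate;
\item $\sum_{j,k}a_{jk}^2=\sum_{j,k}z_j^2\overline{z_k}^{\,2}=\left|\sum_j z_j^2\right|^2$, and likewise for its conjugate.
\end{itemize}
Substituting these into Step 2 gives
$$
\sum_{j<k}|z_j-z_k|^4=\frac12\left(6n^2+2\left|\sum_j z_j^2\right|^2\right)=3n^2+\left|\sum_j z_j^2\right|^2.
$$

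\emph{Where care is needed.} The argument is routine. The only point that needs attention is the passage to the unrestricted double sum in Step 2: the diagonal terms $j=k$ must vanish, which is true for the original expression $|z_j-z_k|^4$. After expansion their individual pieces do not vanish separately ($6-8+2=0$ for $a_{jj}=1$), so the identity must be applied to the full summand rather than term by term. Beyond that, one only has to keep track of the factor $\tfrac12$ and of the conjugate pairs.
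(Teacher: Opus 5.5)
Your proof is correct and takes essentially the same approach as the paper: expand $|z_j-z_k|^4=6-4(z_j\overline{z_k}+\overline{z_j}z_k)+(z_j\overline{z_k})^2+(\overline{z_j}z_k)^2$ and evaluate the sums via $\left|\sum_j z_j\right|^2=0$ and $\left|\sum_j z_j^2\right|^2$. The only difference is bookkeeping. You pass to the full double sum and halve it. The paper instead sums over $j<k$ directly and subtracts the diagonal contribution $n$ in each of its two auxiliary identities.
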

\begin{proof}
An elementary computation shows that
\begin{align*}
\sum_{1\le j<k \le n} (z_j\overline{z_k} +\overline{z_j}z_k) = \left| \sum_{j=1}^n z_j\right|^2 - n=-n
\end{align*}
and
\begin{align*}
\sum_{1\le j<k \le n} \{(z_j\overline{z_k})^2 +(\overline{z_j}z_k)^2\} = \left| \sum_{j=1}^n z_j^2 \right|^2 - n.
\end{align*}
Consequently, we have
\begin{align*}
\sum_{1\le j<k\le n} |z_j-z_k|^4 
&= \sum_{1\le j<k\le n} \{ 6 - 4(z_j\overline{z_k} + \overline{z_j}z_k) + ((z_j\overline{z_k})^2 + (\overline{z_j}z_k)^2)\}\\
&=6 \binom{n}{2} -4 (-n) + \left|\sum_{j=1}^n z_j^2\right|^2 -n\\
&=3n^2+\left|\sum_{j=1}^n z_j^2\right|^2.
\end{align*}
\end{proof}

\begin{thm}\label{thm:UMD}
Assume that $n\ge 4$ and $U\in  {\cal U}_n^{\rm flat}$ and write $\sqrt{n}U=(h_{ij}) \in {\cal H}_n$. Then we have
\begin{align*}
\delta_2(U) &= \frac{n-3}{2n} + \frac{1}{n^4} \sum_{1\le p<q \le n} \left|\sum_{j=1}^n h_{pj}^2 \overline{h_{qj}^2}\right|^2 \\
&= \frac{n-3}{2n} + \frac{1}{2n^4}\| (\sqrt{n}U)^{\circ 2} ((\sqrt{n}U)^{\circ 2})^\ast - n I_n\|_F^2.
\end{align*}
Consequently, we have 
$$
\delta_2(U)\ge \frac{n-3}{2n},
$$ 
with equality if and only if $(\sqrt{n}U)^{\circ 2}\in {\cal H}_n$.
\end{thm}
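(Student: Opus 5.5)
We will compute $\delta_2(U)$ through Lemma~\ref{lem:C}~(3), $\delta_2(U)=\sum_{|I|=|J|=2}|\Det U[I,J]|^4-1$, and reduce the inner sum over $J$, for each fixed pair of rows, to Lemma~\ref{lem:D}. Fix rows $p<q$ and columns $j<k$. Then
$$
n^2\,\Det U[\{p,q\},\{j,k\}]=h_{pj}h_{qk}-h_{pk}h_{qj}.
$$
Multiplying by $\overline{h_{pj}h_{pk}}$, which has modulus one, gives
$$
n^2|\Det U[\{p,q\},\{j,k\}]|=|z_k-z_j|, \qquad z_j:=h_{qj}\overline{h_{pj}}\in\mathbb{T}.
$$
Orthogonality of the rows $p$ and $q$ of $H=\sqrt{n}U$ gives $\sum_j z_j=\sum_j h_{qj}\overline{h_{pj}}=0$. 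Lemma~\ref{lem:D} therefore applies and yields
$$
\sum_{j<k}|\Det U[\{p,q\},\{j,k\}]|^4=\frac{1}{n^4}\Bigl(3n^2+\Bigl|\sum_j h_{qj}^2\overline{h_{pj}^2}\Bigr|^2\Bigr).
$$

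Next we sum over the $\binom n2$ pairs $p<q$. The constant part contributes
$$
\frac{3n^2}{n^4}\binom n2-1=\frac{3(n-1)}{2n}-1=\frac{n-3}{2n}.
$$
The remaining part is $n^{-4}\sum_{p<q}\bigl|\sum_j h_{pj}^2\overline{h_{qj}^2}\bigr|^2$, where conjugating the inner sum does not change its modulus. This gives the first formula.

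For the Frobenius form, set $K=H^{\circ2}$. The $(p,q)$ entry of $KK^*$ is $\sum_j h_{pj}^2\overline{h_{qj}^2}$, and its diagonal entries equal $n$ because $|h_{pj}|=1$. Hence $KK^*-nI_n$ has zero diagonal and is Hermitian, so its squared Frobenius norm is twice the sum over $p<q$. This gives the factor $\tfrac{1}{2n^4}$.

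The inequality $\delta_2(U)\ge \frac{n-3}{2n}$ is then immediate. Equality holds iff $KK^*=nI_n$. Since $K$ has unimodular entries, this is exactly the condition $K\in{\cal H}_n$.

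The only delicate step is the reduction to Lemma~\ref{lem:D}. There we must choose the normalization by $\overline{h_{pj}h_{pk}}$ so that the $z_j$ are defined independently of the pair $(j,k)$, and use orthogonality of the rows to obtain $\sum_j z_j=0$. We must also check that $|\sum_j z_j^2|$ equals the modulus of the off-diagonal entry of $KK^*$. This holds because $z_j^2=h_{qj}^2\overline{h_{pj}^2}$, using $\overline{h}=h^{-1}$ for unimodular entries. The hypothesis $n\ge4$ plays no role in the computation itself.
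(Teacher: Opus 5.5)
Your proof is correct and is essentially the paper's argument: you express each $2\times 2$ minor through the unimodular quotients $z_j$ of two rows, use row orthogonality to get $\sum_j z_j=0$, apply Lemma~\ref{lem:D}, sum over row pairs via Lemma~\ref{lem:C}~(3), and pass to the Frobenius norm using that $KK^*-nI_n$ is Hermitian with zero diagonal. One small slip: since $\Det U[\{p,q\},\{j,k\}]=\frac{1}{n}(h_{pj}h_{qk}-h_{pk}h_{qj})$, the normalizing factor is $n$, not $n^2$; your subsequent $n^{-4}$ already uses the correct factor $n$.
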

\begin{proof}
As in the proof of (2) $\Rightarrow$ (4) in Theorem~\ref{thm:MASA}, for any $1\le p<q\le n$ and $1\le j<k \le n$, we obtain
$$
|\Det U[\{p,q\},\{j,k\}]|^2 = \frac{1}{n^2} \left|h_{pj}\overline{h_{qj}}- h_{pk}\overline{h_{qk}}\right|^2.
$$
For fixed $p<q$, set $z_j := h_{pj}\overline{h_{qj}}$. Then $|z_j|=1$ for all $1\le j \le n$ and $\sum_{j=1}^n z_j = 0$. Therefore, for fixed $p<q$, we have
\begin{align*}
\sum_{j<k} |\Det U[\{p,q\}, \{j,k\}]|^4 
&= \frac{1}{n^4} \sum_{j<k}|z_j-z_k|^4\\
&= \frac{1}{n^4} \left\{ 3n^2 + \left| \sum_{j=1}^n z_j^2\right|^2\right\} \qquad \text{(by Lemma~\ref{lem:D})}\\
&=\frac{3}{n^2} + \frac{1}{n^4}\left| \sum_{j=1}^n h_{pj}^2 \overline{h_{qj}^2}\right|^2.
\end{align*}
Consequently, we obtain
\begin{align*}
\sum_{|I|=|J|=2} |\Det U[I,J]|^4= \frac{3(n-1)}{2n}+ \frac{1}{n^4} \sum_{1\le p<q\le n} \left| \sum_{j=1}^n h_{pj}^2 \overline{h_{qj}^2}\right|^2.
\end{align*}
By Lemma~\ref{lem:C} (3), we get
\begin{align*}
\delta_2(U) 
&= \frac{3(n-1)}{2n}+ \frac{1}{n^4} \sum_{1\le p<q\le n} \left| \sum_{j=1}^n h_{pj}^2 \overline{h_{qj}^2}\right|^2 -1\\
&=\frac{n-3}{2n} + \frac{1}{n^4} \sum_{1\le p<q \le n} \left|\sum_{j=1}^n h_{pj}^2 \overline{h_{qj}^2}\right|^2.
\end{align*}
The Frobenius-norm identity in the statement follows by summing the squared absolute values of the off-diagonal entries. Clearly, we have $\delta_2(U)\ge \frac{n-3}{2n}$. Moreover, $\delta_2(U)= \frac{n-3}{2n}$ if and only if $\sum_{j=1}^n h_{pj}^2 \overline{h_{qj}^2}=0$ for any $1\le p<q\le n$, equivalently, $(\sqrt{n}U)^{\circ 2} ((\sqrt{n}U)^{\circ 2})^\ast =n I_n$. Since $|h_{pj}|=1$, we also have $|h_{pj}^2|=1$ for all $1\le p,j\le n$. Finally, this means that $(\sqrt{n}U)^{\circ 2}\in {\cal H}_n$.
\end{proof}

As a consequence of Theorems~\ref{thm:monotonicity} and \ref{thm:UMD}, we obtain the following lower bound for the uniform-minor discrepancy.
\begin{cor}
For any $U\in  {\cal U}_n^{\rm flat}$, we have
$$
\delta_r(U) \ge \frac{n-3}{2n}, \qquad 2 \le r \le n-2.
$$
\end{cor}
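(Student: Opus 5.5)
The plan is to combine Theorem~\ref{thm:UMD} with the monotonicity and symmetry of Theorem~\ref{thm:monotonicity}, splitting into cases according to the position of $r$ relative to $\lfloor n/2\rfloor$.

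First, the range $2\le r\le n-2$ is nonempty only when $n\ge 4$, so we may assume $n\ge4$. Then both Theorem~\ref{thm:UMD} and Theorem~\ref{thm:monotonicity} apply to $U\in{\cal U}_n^{\rm flat}$. By Theorem~\ref{thm:UMD} we have $\delta_2(U)\ge \frac{n-3}{2n}$.

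Next, consider $2\le r\le\lfloor n/2\rfloor$. Since $n\ge4$, we have $\lfloor n/2\rfloor\ge2$, so this range contains $r=2$. The chain
$$
\delta_2(U)\le\delta_3(U)\le\cdots\le\delta_{\lfloor n/2\rfloor}(U)
$$
from Theorem~\ref{thm:monotonicity} then gives $\delta_r(U)\ge\delta_2(U)\ge\frac{n-3}{2n}$.

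Finally, consider $\lfloor n/2\rfloor< r\le n-2$. Put $s:=n-r$. Then $2\le s$, because $r\le n-2$. Also $s<n-\lfloor n/2\rfloor=\lceil n/2\rceil$, hence $s\le\lfloor n/2\rfloor$. The symmetry $\delta_r(U)=\delta_{n-r}(U)=\delta_s(U)$ from Lemma~\ref{lem:C}~(2) therefore reduces this case to the previous one.

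None of these steps is a real obstacle. The only points that need care are the index bookkeeping showing that $n-r$ lands in $[2,\lfloor n/2\rfloor]$, and the observation that $n\ge4$ is automatic whenever the range of $r$ is nonempty.
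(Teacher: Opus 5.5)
Your proposal is correct and is essentially the paper's argument: the paper states the corollary as an immediate consequence of Theorem~\ref{thm:UMD}, which gives $\delta_2(U)\ge\frac{n-3}{2n}$, combined with the monotonicity chain and the symmetry $\delta_r(U)=\delta_{n-r}(U)$ of Theorem~\ref{thm:monotonicity}. Your index bookkeeping fills in the details the paper leaves implicit, namely that $n\ge4$ is forced when the range is nonempty and that $n-r\in[2,\lfloor n/2\rfloor]$ whenever $\lfloor n/2\rfloor<r\le n-2$.
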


A complex Hadamard matrix whose entrywise square is also complex Hadamard is called an {\it S-Hadamard matrix}; see \cite[Definition 2.1]{L19}. Thus, equality in Theorem~\ref{thm:UMD} holds precisely when $\sqrt{n}U$ is an S-Hadamard matrix.

S-Hadamard matrices exist in every odd order: the Fourier matrix of odd order is S-Hadamard; see \cite[Proposition 2.3.7]{P26}. Hence the lower bound in Theorem~\ref{thm:UMD} is attained in every odd dimension $n\ge 5$. For even orders, S-Hadamard matrices are known to exist in several cases (see \cite[Proposition 2.3 and Example 2.4]{L19}), but the existence question remains open in general. This leads naturally to the problem of determining the smallest possible uniform-minor discrepancy in each dimension.

For $n\ge 4$, we define
$$
m_n:=\min_{U\in{\cal U}_n^{\rm flat}}\delta_2(U).
$$
The minimum exists because ${\cal U}_n^{\rm flat}$ is nonempty and compact and $\delta_2$ is continuous. The equality characterization in Theorem~\ref{thm:UMD} immediately gives the following proposition.

\begin{prop}\label{prop:S-Hadamard}
For $n\ge 4$, the following conditions are equivalent.
\begin{enumerate}[\rm (1)]
\item There exists an S-Hadamard matrix of order $n$.
\item $m_n=\dfrac{n-3}{2n}$.
\end{enumerate}
\end{prop}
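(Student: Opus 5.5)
The proposition is a direct repackaging of the equality case in Theorem~\ref{thm:UMD}, so the plan is to prove both implications from that theorem. We also use the fact, recorded just before the statement, that $m_n$ is a genuine minimum, attained on the compact set ${\cal U}_n^{\rm flat}$. Throughout, $n\ge4$, so Theorem~\ref{thm:UMD} applies to every $U\in{\cal U}_n^{\rm flat}$.

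For (1) $\Rightarrow$ (2), suppose $H$ is an S-Hadamard matrix of order $n$, and set $U:=H/\sqrt{n}$. Since $H\in{\cal H}_n$, we have $UU^*=I_n$, so $U\in{\cal U}_n^{\rm flat}$ with $\sqrt{n}U=H$. By assumption $H^{\circ2}\in{\cal H}_n$, so the equality case of Theorem~\ref{thm:UMD} gives $\delta_2(U)=\frac{n-3}{2n}$. Hence $m_n\le\frac{n-3}{2n}$. The inequality in Theorem~\ref{thm:UMD} gives $\delta_2(V)\ge\frac{n-3}{2n}$ for every $V\in{\cal U}_n^{\rm flat}$, so $m_n\ge\frac{n-3}{2n}$. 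Therefore $m_n=\frac{n-3}{2n}$.

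For (2) $\Rightarrow$ (1), suppose $m_n=\frac{n-3}{2n}$. Because the minimum is attained, there is $U\in{\cal U}_n^{\rm flat}$ with $\delta_2(U)=\frac{n-3}{2n}$. The equality characterization in Theorem~\ref{thm:UMD} then gives $(\sqrt{n}U)^{\circ2}\in{\cal H}_n$. So $\sqrt{n}U$ is a complex Hadamard matrix whose entrywise square is complex Hadamard, i.e.\ an S-Hadamard matrix of order $n$.

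The only point needing care is the attainment of the minimum used in (2) $\Rightarrow$ (1); with a mere infimum, equality would not yield an S-Hadamard matrix. Attainment holds because ${\cal U}_n^{\rm flat}$ is nonempty (it contains the normalized Fourier matrix) and closed in the compact group ${\cal U}_n$, being cut out by the closed conditions $|u_{ij}|=n^{-1/2}$. Moreover, $\delta_2$ is a polynomial in the entries of $U$ and their conjugates, hence continuous.
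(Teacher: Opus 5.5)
Your proof is correct and matches the paper's argument: both directions come from the equality case of Theorem~\ref{thm:UMD}, and (2) $\Rightarrow$ (1) uses that the minimum $m_n$ is attained on ${\cal U}_n^{\rm flat}$. You are a little more explicit than the paper, since you state the lower bound $m_n\ge\frac{n-3}{2n}$ needed in (1) $\Rightarrow$ (2) and you justify compactness of ${\cal U}_n^{\rm flat}$ and continuity of $\delta_2$, but the route is the same.
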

\begin{proof}
If there exists an S-Hadamard matrix $H$ of order $n$, then $U=\frac{1}{\sqrt{n}}H \in {\cal U}_n^{\rm flat}$ and $\delta_2(U)=\frac{n-3}{2n}$ by Theorem~\ref{thm:UMD}. Conversely, if $m_n = \frac{n-3}{2n}$, then there exists $U\in {\cal U}_n^{\rm flat}$ such that $\delta_2(U)=\frac{n-3}{2n}$. By Theorem~\ref{thm:UMD}, $\sqrt{n} U$ is an S-Hadamard matrix of order $n$.
\end{proof}

If no S-Hadamard matrix of order $n$ exists, then $m_n>\frac{n-3}{2n}$. In this case, determining $m_n$ gives the optimal improvement of the lower bound in Theorem~\ref{thm:UMD}.

\begin{ex}[The class ${\cal U}^{\rm flat}_4$]\label{ex:S-Hadamard4}
By the classification in \cite[Section 5.4]{TZ06}, every matrix in ${\cal U}_4^{\rm flat}$ is Hadamard equivalent to
$$
U(t)=\frac{1}{2}
\begin{pmatrix}
1 & 1 & 1 & 1\\
1 & e^{{\rm i}t} & -1 & -e^{{\rm i}t}\\
1 & -1 & 1 & -1\\
1 & -e^{{\rm i}t} & -1 & e^{{\rm i}t}
\end{pmatrix},
\qquad t\in\mathbb R.
$$
Since $\delta_2$ is invariant under Hadamard equivalence, it is enough to compute $\delta_2(U(t))$. A direct computation shows that
\begin{align*}
(\sqrt{4} U(t))^{\circ 2} ((\sqrt{4} U(t))^{\circ 2} )^\ast -4 I_4 
= 2 \begin{pmatrix}
0 & 1+ e^{-2{\rm i}t} & 2 & 1 + e^{-2{\rm i}t}\\
1 + e^{2{\rm i}t} & 0 & 1+ e^{2{\rm i}t} & 2\\
2 & 1+ e^{-2{\rm i}t} & 0 & 1+ e^{-2{\rm i}t}\\
1 + e^{2{\rm i}t} & 2 & 1+e^{2{\rm i}t} & 0
\end{pmatrix}.
\end{align*}
Therefore, we have
\begin{align*}
\|(\sqrt{4} U(t))^{\circ 2} ((\sqrt{4} U(t))^{\circ 2} )^\ast -4 I_4\|_F^2
&= 4 \{ 4 |1+e^{2{\rm i}t}|^2 + 4 |1+e^{-2{\rm i}t}|^2 + 4 \times 2^2\}\\
&=64 (2 \cos^2 t +1).
\end{align*}
Using Theorem \ref{thm:UMD}, we obtain
\begin{align*}
\delta_2(U(t))=\frac{1}{8} + \frac{1}{2\times 4^4} \times 64 (2 \cos^2 t +1)=\frac{1+\cos^2 t}{4}.
\end{align*}
Therefore, $\min_{t\in\mathbb R}\delta_2(U(t))=1/4$ and the minimum is attained precisely when $\cos t=0$. Since every matrix in ${\cal U}^{\rm flat}_4$ is Hadamard equivalent to some
$U(t)$ and $\delta_2$ is invariant under Hadamard equivalence, we conclude that $m_4=1/4$. 

On the other hand, Theorem~\ref{thm:UMD} gives the universal lower bound $1/8$. Hence $m_4>1/8$, so the universal lower bound is not attained in order $4$. Thus, there is no S-Hadamard matrix of order $4$. This nonexistence is also recorded in \cite[Proposition 2.3.6]{P26} and follows from the classification of order $4$ complex Hadamard matrices in \cite[Section 5.4]{TZ06}.
\end{ex}

\begin{ex}[Fourier matrices]\label{ex:Fourier}
Let $n \ge 4$, and let
$$
F_n=\left(\omega_n^{(p-1)(j-1)}\right)_{p,j=1}^n,
\qquad \omega_n=e^{2\pi {\rm i}/n},
$$
be the $n\times n$ Fourier matrix, and put $U_n:=\frac1{\sqrt n}F_n\in {\cal U}_n^{\rm flat}$; see \cite{TZ06} for details. We compute $\delta_2(U_n)$ explicitly. By Theorem~\ref{thm:UMD}, we have
$$
\delta_2(U_n)=\frac{n-3}{2n}+\frac{1}{n^4}
\sum_{1\leq p<q\leq n}
\left|\sum_{j=1}^n
h_{pj}^2\overline{h_{qj}^2}\right|^2,
$$
where $h_{pj}=\omega_n^{(p-1)(j-1)}$. For $p<q$,
$$
\sum_{j=1}^n
h_{pj}^2\overline{h_{qj}^2}
=\sum_{j=0}^{n-1}\omega_n^{2(p-q)j}
=\begin{cases}
n, & n\mid 2(p-q),\\
0, & n\nmid 2(p-q).
\end{cases}
$$

If $n$ is odd, then $n\mid 2(p-q)$ never occurs since $1\le p < q \le n$. Hence
$$
\delta_2(U_n)=\frac{n-3}{2n}.
$$

If $n$ is even, then the condition $n\mid 2(p-q)$ holds precisely when $q-p=n/2$. There are exactly $n/2$ such pairs $(p,q)$, and hence
$$
\sum_{1\leq p<q\leq n}
\left|\sum_{j=1}^nh_{pj}^2\overline{h_{qj}^2}\right|^2
=\frac{n}{2}\times n^2
=\frac{n^3}{2}.
$$
Therefore
$$
\delta_2(U_n)=\frac{n-3}{2n}+\frac{1}{2n}=\frac{n-2}{2n}.
$$
Consequently, we get
$$
\delta_2\left(\frac1{\sqrt n}F_n\right)=
\begin{cases}
\dfrac{n-3}{2n},
& n \text{ is odd},\\[3mm]
\dfrac{n-2}{2n},
& n \text{ is even}.
\end{cases}
$$
In particular, when $n$ is odd, $U_n$ attains the universal lower bound in Theorem~\ref{thm:UMD}, and $F_n$ is an S-Hadamard matrix.
\end{ex}

\begin{rem}
The nonexistence of an S-Hadamard matrix of order $8$ is conjectured in \cite[Conjecture 3.1.8]{P26}. By Proposition~\ref{prop:S-Hadamard}, this conjecture is equivalent to $m_8>5/16$. Example \ref{ex:Fourier} gives the upper bound
$$
\frac{5}{16}\le m_8\le\frac{3}{8}.
$$
In particular, determining whether $m_8>5/16$ is equivalent to settling the existence question in order $8$.
\end{rem}

Finally, we compute the discrepancy $\delta_2$ of tensor products of flat unitary matrices.

\begin{prop}\label{prop:tensor}
Let $m,n\ge 4$, $U \in  {\cal U}_m^{\rm flat}$ and $V\in  {\cal U}_n^{\rm flat}$. Then we have
\begin{align*}
\delta_2(U\otimes V) = &  \frac{mn-3}{2mn}+ 2 \left( \delta_2(U)-\frac{m-3}{2m}\right)\left(\delta_2(V)-\frac{n-3}{2n}\right) \\
&+\frac{1}{n}\left( \delta_2(U)-\frac{m-3}{2m}\right) + \frac{1}{m}\left(\delta_2(V)-\frac{n-3}{2n}\right).
\end{align*}
\end{prop}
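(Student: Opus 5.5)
The plan is to reduce everything to the Frobenius-norm formula in Theorem~\ref{thm:UMD}, applied three times: to $U$ (order $m$), to $V$ (order $n$), and to $U\otimes V$ (order $mn$). Put $H:=\sqrt{m}U\in{\cal H}_m$ and $K:=\sqrt{n}V\in{\cal H}_n$. Then $\sqrt{mn}(U\otimes V)=H\otimes K$. Its entries are products of unimodular numbers, and it is a scaled tensor product of unitaries, so $U\otimes V\in{\cal U}_{mn}^{\rm flat}$. Since $mn\ge 4$, Theorem~\ref{thm:UMD} applies to $U\otimes V$.

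The key algebraic observation is that the entrywise square is compatible with Kronecker products: $(H\otimes K)^{\circ 2}=H^{\circ2}\otimes K^{\circ2}$. Set $A:=H^{\circ2}(H^{\circ2})^\ast$ and $B:=K^{\circ2}(K^{\circ2})^\ast$. Then
$$
(H\otimes K)^{\circ2}\bigl((H\otimes K)^{\circ2}\bigr)^\ast=A\otimes B.
$$
Because all entries of $H^{\circ2}$ and $K^{\circ2}$ are unimodular, $A$ has constant diagonal $m$ and $B$ has constant diagonal $n$. Write
$$
a:=\|A-mI_m\|_F^2,\qquad b:=\|B-nI_n\|_F^2 .
$$
These are the sums of squared moduli of the off-diagonal entries. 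By Theorem~\ref{thm:UMD}, $x:=\delta_2(U)-\frac{m-3}{2m}=\frac{a}{2m^4}$ and $y:=\delta_2(V)-\frac{n-3}{2n}=\frac{b}{2n^4}$.

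The main step is to compute $\|A\otimes B-mnI_{mn}\|_F^2$ in terms of $a$ and $b$. I will expand the square:
$$
\|A\otimes B-mnI\|_F^2=\|A\otimes B\|_F^2-2mn\,{\rm Re}\,{\rm tr}(A\otimes B)+(mn)^2\cdot mn .
$$
The three terms are handled as follows.
\begin{enumerate}[\rm (i)]
\item The norm is multiplicative: $\|A\otimes B\|_F^2=\|A\|_F^2\|B\|_F^2=(a+m^3)(b+n^3)$, since the diagonal of $A$ contributes $m\cdot m^2$ and the diagonal of $B$ contributes $n\cdot n^2$.
\item The trace factors as ${\rm tr}(A\otimes B)={\rm tr}A\,{\rm tr}B=m^2n^2$.
\end{enumerate}
Substituting gives
$$
\|A\otimes B-mnI\|_F^2=ab+an^3+bm^3 .
$$
This expansion is the only place where care is needed. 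The point to get right is separating the diagonal part from the off-diagonal part; once that is done, the cross term $-2m^3n^3$ exactly cancels against the constant terms $m^3n^3+m^3n^3$.

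Finally, I will insert this into Theorem~\ref{thm:UMD} for order $mn$:
$$
\delta_2(U\otimes V)=\frac{mn-3}{2mn}+\frac{ab+an^3+bm^3}{2m^4n^4}.
$$
Rewriting with $a=2m^4x$ and $b=2n^4y$, the three fractions become
$$
\frac{ab}{2m^4n^4}=2xy,\qquad \frac{an^3}{2m^4n^4}=\frac{x}{n},\qquad \frac{bm^3}{2m^4n^4}=\frac{y}{m}.
$$
This is exactly the claimed identity.
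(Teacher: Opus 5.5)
Your proof is correct and follows the paper's argument: the paper likewise sets $R=H^{\circ2}(H^{\circ2})^\ast$ and $S=K^{\circ2}(K^{\circ2})^\ast$ and uses $(H\otimes K)^{\circ2}\bigl((H\otimes K)^{\circ2}\bigr)^\ast=R\otimes S$. It then uses the constant diagonals together with multiplicativity of trace and Frobenius norm to obtain $\|R\otimes S-mnI_{mn}\|_F^2=\|R-mI_m\|_F^2\|S-nI_n\|_F^2+n^3\|R-mI_m\|_F^2+m^3\|S-nI_n\|_F^2$, and applies Theorem~\ref{thm:UMD} to $U$, $V$ and $U\otimes V$. The only cosmetic difference is that the paper writes $\|R\otimes S-mnI_{mn}\|_F^2=\|R\|_F^2\|S\|_F^2-m^3n^3$ directly, whereas you expand the square with an explicit trace cross term.
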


\begin{proof}
Put $H=\sqrt{m}U\in{\cal H}_m$ and $K=\sqrt{n}V\in{\cal H}_n$, and define $R=H^{\circ 2}(H^{\circ 2})^*$ and $S=K^{\circ 2}(K^{\circ 2})^*$. Since the diagonal entries of $R$ and $S$ are $m$ and $n$, respectively, we have ${\sf Tr}(R)=m^2$ and ${\sf Tr}(S)=n^2$. Consequently, we get
$$
\|R-mI_m\|_F^2=\|R\|_F^2-m^3, \qquad \|S-nI_n\|_F^2=\|S\|_F^2-n^3.
$$
The matrix $H\otimes K$ is complex Hadamard and $(H\otimes K)^{\circ 2}\bigl((H\otimes K)^{\circ 2}\bigr)^*=R\otimes S$. Using the multiplicativity of the trace and the Frobenius norm
under tensor products, we obtain
\begin{align*}
\|R\otimes S-mnI_{mn}\|_F^2
&=\|R\|_F^2\|S\|_F^2-m^3n^3\\
&=\bigl(\|R-mI_m\|_F^2+m^3\bigr)\bigl(\|S-nI_n\|_F^2+n^3\bigr)-m^3n^3\\
&=\|R-mI_m\|_F^2\|S-nI_n\|_F^2+n^3\|R-mI_m\|_F^2+m^3\|S-nI_n\|_F^2.
\end{align*}
By Theorem \ref{thm:UMD}, we have
$$
\|R-mI_m\|_F^2=2m^4\left(\delta_2(U)-\frac{m-3}{2m}\right),
\qquad
\|S-nI_n\|_F^2=2n^4\left(\delta_2(V)-\frac{n-3}{2n}\right).
$$
Applying the same theorem to $U\otimes V$ therefore gives
\begin{align*}
\delta_2(U\otimes V)
&=\frac{mn-3}{2mn}+\frac{1}{2m^4n^4}\|R\otimes S-mnI_{mn}\|_F^2\\
&=\frac{mn-3}{2mn}+2\left(\delta_2(U)-\frac{m-3}{2m}\right)\left(\delta_2(V)-\frac{n-3}{2n}\right)\\
&\quad+\frac{1}{n}\left(\delta_2(U)-\frac{m-3}{2m}\right)+\frac{1}{m}\left(\delta_2(V)-\frac{n-3}{2n}\right),
\end{align*}
as required.
\end{proof}

\subsection*{Acknowledgement}
The author was supported by JSPS KAKENHI Grant Number JP22K13925. 

\subsection*{Declaration on the use of Generative AI}
ChatGPT 5.6 was used as an exploratory aid for mathematical discussions, particularly concerning the uniform-minor discrepancy, for literature searches on Hadamard matrices, and for English-language editing and suggestions on wording and presentation. All mathematical results, proofs, and calculations in the final manuscript were independently verified by the author.

\vspace{6mm}

\hspace{-7mm}{\bf Yuki Ueda}:\\
Faculty of Education, Hokkaido University of Education\\
9 Hokumon-cho, Asahikawa, Hokkaido 070-8621, Japan\\
E-mail: ueda.yuki@a.hokkyodai.ac.jp

\end{document}